\theoremstyle{plain}
\newtheorem{thm}{Theorem}[section]
\newtheorem{prop}[thm]{Proposition}
\newtheorem{cor}[thm]{Corollary}
\newtheorem{lem}[thm]{Lemma}
\theoremstyle{definition}
\newtheorem{ex}{Example}[section]
\theoremstyle{remark}
\begin{document}
\begin{center}

{\huge Substitution-based structures with\\[0.75ex] absolutely 
continuous spectrum}\vspace{4ex}

{\Large Lax Chan, Uwe Grimm and Ian Short}\vspace{4ex}

{\small
School of Mathematics and Statistics, The Open University,
Milton Keynes MK7 6AA, UK}\vspace{2ex}
\end{center}

\begin{abstract}
By generalising Rudin's construction of an aperiodic sequence, we derive new
substitution-based structures which have purely absolutely continuous
diffraction and mixed dynamical spectrum, with absolutely continuous
and pure point parts. We discuss several examples, including a 
construction based on Fourier matrices which yields constant-length
substitutions for any length.\vspace{2ex}
\end{abstract}

\section{Introduction}\sloppy

Substitution dynamical systems are widely used as toy models for
aperiodic order in one dimension~\cite{PF02,MQ10}. By an argument of
Dworkin~\cite{SD93}, the diffraction spectrum of these systems is
related to part of the dynamical spectrum, which is the spectrum of a
unitary operator acting on a Hilbert space, as induced by the shift
action. We refer the readers to~\cite{BL16} and references therein for
recent developments and the current knowledge of the relationship between
these different spectral characterisations. Here we are interested in
systems that feature absolutely continuous spectra, in spite of being
perfectly ordered.

A paradigm of such a system is the (binary) Rudin--Shapiro or
Golay--Shapiro sequence.\footnote{For simplicity, we will refer to
  this sequence as the Rudin--Shapiro sequence, as this is the more
  commonly used term.} It was introduced in~\cite{G51,Rud59,Sha51} in
answer to a question raised by Salem~\cite{Rud59} in the context of
harmonic analysis; see also \cite[Sec.~4.7.1]{BG13}. This sequence,
represented by a Dirac comb with balanced weights ($\pm 1$), is a
substitution-based structure with purely absolutely continuous
diffraction spectrum, so it has a mixed dynamical spectrum, with a
pure point part arising from the underlying constant-length
substitution structure. Indeed, this deterministic sequence has the
stronger property that its two-point correlations vanish exactly for
\textit{any} non-zero distance; a direct proof of this property can be
found in \cite[Sec.~10.2]{BG13}; see also \cite{PF02}. Recall that the
diffraction measure is the Fourier transform of the autocorrelation
measure, which in this case is just $\delta_{0}$, so the diffraction
measure is Lebesgue measure.  Some generalisations of the
Rudin--Shapiro sequence were provided in~\cite{AL91}, but to date
relatively few examples of substitution-based sequences of this type
are known explicitly. There are good reasons for this, as one would
expect a generic substitution-based structure to produce a singular
continuous spectrum \cite{AB14}.

In~\cite{Fra03}, a systematic generalisation of the Rudin--Shapiro
system to higher-dimensional substitutions was derived. It employs
Hadamard matrices (matrices with elements $\pm 1$ whose rows are
mutually orthogonal). The underlying systems are symbolic
constant-length substitutions on a finite alphabet $\mathcal{A}$,
based on arrangements of letters on the (hyper)cubic lattice
$\mathbb{Z}^{d}$. Letters in the alphabet are paired, so for each
letter $a\in\mathcal{A}$ there is a twin letter $\overline{a}\in\mathcal{A}$, with $\overline{a}\neq a$ and $\overline{\overline{a}}=a$.
In particular, the author proved the following result, where
$\mathbb{X}$ denotes the hull of the substitution, $\mu$ the
corresponding invariant measure, $H_{\! D}$ the discrete spectrum, and
$Z(f)$ the cyclic subspace associated to a function $f\in
L^{2}(\mathbb{X},\mu)$.

\begin{thm}[\cite{Fra03}]\label{thm:Fra03}
  Let\/ $(\mathbb{X},\mathbb{Z}^{d},\mu)$ be a dynamical system
  associated to an aperiodic\/ $\mathbb{Z}^{d}$-substitution
  subject to the conditions that\/\footnote{We refer to the original
    article for more details on these conditions.}
\begin{itemize}
\item each letter in the alphabet\/ $\mathcal{A}$ is only allowed to
  appear in the position given by its underlying number\/ $($so the
  images of letters under the substitution differ only in the number
  and/or position of the bars that distinguish paired letters\/$)$;
\item paired letters are substituted by corresponding paired blocks;
\item the symbol matrix of the corresponding substitution is a
  Hadamard matrix.
\end{itemize} 
Then, there exist functions\/ $f^{}_1,\ldots, f^{}_{K}\in L^{2}(\mathbb{X},\mu)$,
each with spectral measure equal to Lebesgue measure, such
that
\[
   L^{2}(\mathbb{X},\mu) \,= \, H_{D}\oplus Z(f^{}_1)\cdots\oplus Z(f^{}_K).
\]
\end{thm}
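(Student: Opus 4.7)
The plan is to extend the classical Rudin--Shapiro argument by exploiting the pairing on $\mathcal{A}$ together with the Hadamard condition on the symbol matrix. First, I would isolate the discrete part $H_{D}$ as the pullback of $L^{2}$ of the $L$-adic odometer along the canonical factor map, where $L$ is the common side length of the substitution. This odometer is the maximal equicontinuous factor of any primitive aperiodic constant-length $\mathbb{Z}^{d}$-substitution and carries precisely the pure point spectrum. The remaining task is then to produce functions $f^{}_{1},\ldots,f^{}_{K}$ whose cyclic subspaces are mutually orthogonal, each with Lebesgue spectral measure, and together exhausting $H_{D}^{\perp}$.

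The construction is dictated by the pairing. Writing the alphabet as $\{a_{1},\bar{a}_{1},\ldots,a_{K},\bar{a}_{K}\}$, set
\[
  f^{}_{i}(x) \,=\, \mathbf{1}_{\{x_{0}=a_{i}\}}
               \,-\, \mathbf{1}_{\{x_{0}=\bar{a}_{i}\}},
\]
the direct $d$-dimensional analogues of the $\pm 1$ Rudin--Shapiro weights. To show that each $f^{}_{i}$ has Lebesgue spectral measure, it suffices to verify $\langle f^{}_{i},U^{n}f^{}_{i}\rangle = 0$ for every non-zero $n\in\mathbb{Z}^{d}$, where $U$ is the Koopman operator of the shift. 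I would do this by one round of desubstitution: writing $n = Lq + r$ with $r$ in a fundamental $L^{d}$-block, the correlation rewrites as a sum over pairs of positions inside a supertile, weighted by entries of the symbol matrix, and the Hadamard property (row orthogonality) forces the sum to vanish when $r\neq 0$. A renormalisation argument handles the residual case $r = 0$ by iterating to smaller scales, and Bochner--Herglotz then converts $\delta_{0}$ autocorrelation into Lebesgue spectral measure. The same desubstitution trick, applied to mixed correlations $\langle f^{}_{i},U^{n}f^{}_{j}\rangle$, establishes orthogonality between $Z(f^{}_{i})$ and $Z(f^{}_{j})$.

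Orthogonality of each $Z(f^{}_{i})$ to $H_{D}$ is straightforward: every $f^{}_{i}$ averages to zero along every fibre of the odometer factor map because it is antisymmetric under the pairing involution while odometer-measurable functions are symmetric. The main obstacle I expect is completeness of the decomposition. To handle it I would show that each cylinder indicator $\mathbf{1}_{\{x_{0}=a\}}$ splits as the half-sum and half-difference of $\mathbf{1}_{\{x_{0}=a_{i}\}}$ and $\mathbf{1}_{\{x_{0}=\bar{a}_{i}\}}$ for the appropriate $i$; the antisymmetric piece is $\tfrac{1}{2}f^{}_{i}$, while the symmetric piece is invariant under the pairing involution and, by the positional-rigidity hypothesis of the theorem, factors through the odometer map and hence lies in $H_{D}$. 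Translated rank-one cylinders come from applying powers of $U$, and higher-rank cylinders then follow by a product-and-approximation argument. The delicate part is controlling joint correlations of several pair functions carefully enough to see that no spectral information is lost when one passes from rank-one to higher-rank cylinders.
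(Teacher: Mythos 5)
This theorem is not proved in the paper at all: it is quoted verbatim from Frank's article \cite{Fra03} as background (note the bracketed citation in the theorem header and the footnote deferring the precise hypotheses to the original source). So there is no in-paper proof to measure your attempt against; any assessment has to be of your sketch on its own terms as a reconstruction of Frank's argument.

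As such a reconstruction, your outline identifies the right ingredients --- the odometer as the source of $H_{D}$, antisymmetric pair functions $f_{i}=\mathbf{1}_{\{x_{0}=a_{i}\}}-\mathbf{1}_{\{x_{0}=\bar{a}_{i}\}}$ as candidates for Lebesgue generators, and the Hadamard row orthogonality as the mechanism that kills correlations --- but two steps are genuinely underdeveloped. First, the spectral computation: desubstitution at a lag $n=Lq+r$ with $r\neq 0$ does produce a sum controlled by orthogonality of rows of the symbol matrix, but at lags divisible by $L$ you obtain a \emph{recursion} relating the correlation at lag $Lq$ to correlations at lag $q$, not immediate vanishing; one must solve this recursion and show its only solution is $\delta_{0}$. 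Moreover, the mixed correlations $\langle f_{i},U^{n}f_{j}\rangle$ for $i\neq j$ are not obviously zero for your naive choice of the $f_{i}$ (already in the Rudin--Shapiro case the off-diagonal correlation matrix at odd lags involves a different $\pm1$ factor of the subshift), and in general one must diagonalise the matrix of cross-correlation measures and pass to suitable linear combinations of the pair functions before obtaining mutually orthogonal cyclic subspaces; this also means $K$ need not simply equal the number of letter pairs. Second, and more seriously, the completeness step is where the real content lies and your sketch only gestures at it: a product of translates of rank-one cylinder functions does not automatically lie in $H_{D}\oplus Z(f_{1})\oplus\cdots\oplus Z(f_{K})$ just because each factor does, since a direct sum of cyclic subspaces is not an algebra. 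The standard route is to use recognizability to express rank-$k$ cylinders in terms of level-$k$ supertile coordinates (odometer data) together with a single letter read at scale $k$, and then to show the antisymmetric parts at every scale already lie in the $Z(f_{i})$; the ``product-and-approximation argument'' you defer to is precisely this and cannot be waved through.
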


In this article, we provide further examples of substitution-based
structures with\break Lebesgue spectrum. These systems do not satisfy the
last condition of Theorem~\ref{thm:Fra03}, although they still appear
to have a close relationship to Hadamard matrices and their complex
analogues. Our approach is based on modifying and extending the
original construction of Rudin~\cite{Rud59}. As a consequence, our
examples are sequences $(\varepsilon_{n})^{}_{n\in\mathbb{N}}$ that
satisfy the property
\begin{equation}\label{equation:1a}
   \sup_{|x|=1}^{}\bigg\lvert\sum_{n=1}^\infty \varepsilon_{n}x^n\bigg\rvert 
   \,\leq\, C\, N^{\frac{1}{2}},
\end{equation}
where the supremum is taken over complex numbers of unit modulus. We
shall refer to this property as the \emph{root-$N$ property}. This
bound on the growth of the exponential sums implies that the
corresponding diffraction spectrum is purely absolutely continuous;
compare \cite{AL91,CG17a}.

We start by revisiting the Rudin--Shapiro sequence. We then generalise
this approach in Section 3 by introducing a sequence of signs in the
recurrence relations. This results in substitution-based structures in
which the underlying substitutions are of length $2^k$ for
$k\in\mathbb{N}$.  In Section 4, we go a step further by considering
complex coefficients that are related to Fourier matrices. In this
case, we obtain new substitutions for \emph{any} length $n\ge 3$, which
give rise to weighted Dirac combs with (in the balanced weight case)
purely absolutely continuous diffraction.

\section{The Rudin--Shapiro sequence revisited}

Let us briefly review Rudin's original construction of the
Rudin--Shapiro (RS) sequence. For details of the proof, see~\cite{Rud59}.

We start by defining two sequences of polynomials, $(P_{k}(x))_{k\in\mathbb{N}_0}$ and
$(Q_{k}(x))_{k\in\mathbb{N}_0}$, where $P_k$ and $Q_k$ both have degree $2^{k}$. They
are determined by the initial choices $P^{}_{0}(x)=Q^{}_{0}(x)=x$
together with the recurrence relations
\begin{equation}\label{equation:2a}
\begin{split}
  P_{k+1}(x)\, =\, P_{k}(x)+x^{2^{k}}Q_{k}(x),\\
  Q_{k+1}(x)\, =\, P_{k}(x)-x^{2^{k}}Q_{k}(x).
\end{split}
\end{equation} 
It is clear from Eq.~\eqref{equation:2a} that the first
$2^k$ terms of $P_{k+1}(x)$ and of $Q_{k+1}(x)$ coincide with those of
$P_{k}(x)$, and that their remaining terms differ by a
sign. By construction, $P_{k}(x)$ is of the form
\begin{equation}\label{equation:2b}
  P_{k}(x)\, =\,\sum_{n=1}^{2^{k}}\,\varepsilon_{n}\,x^n,
\end{equation}
so we can define a binary sequence
$(\varepsilon_{n})^{}_{n\in\mathbb{N}}\in\{\pm 1\}^{\mathbb{N}}$ from
the corresponding coefficients. This is the binary RS
sequence. For example, for $k=3$, we have the polynomial
$P_3(x)=x+x^{2}+x^{3}-x^{4}+x^{4}(x+x^{2}-x^{3}+x^{4})$, from which we
read off the sequence $111\overline{1}11\overline{1}1$, where here (and
henceforth) we use the convention that $\overline{1}=-1$.

If $a^{}_{k}=\varepsilon^{}_{1}\varepsilon^{}_{2}\dotsb \varepsilon^{}_{2^{k}}
\in\{\pm 1\}^{2^{k}}$ denotes the word of length $2^k$ of coefficients
of $P^{}_{k}(x)$, and $b^{}_{k}$ denotes the corresponding word for
$Q^{}_{k}(x)$, then the recurrence relations of Eq.~\eqref{equation:2a}
correspond to the concatenation relations
\begin{equation}\label{eq:concat}
\begin{split}
    a^{}_{k+1} \, & = \, a^{}_{k} b^{}_{k},\\
    b^{}_{k+1} \, & = \, a^{}_{k} \overline{b^{}_{k}},
\end{split}
\end{equation}
on words in the two-letter alphabet $\{1,\overline{1}\}$, with initial
values $a_{0}=b_{0}=1$. 

The concatenation relations \eqref{eq:concat} can be seen to
correspond to the substitution rule $A\mapsto AB$, $B\mapsto
A\overline{B}$ on the four-letter alphabet
$\{A,B,\overline{A},\overline{B}\}$, which upon completion to a
four-letter substitution rule becomes
\begin{equation}\label{eq:S+rule}
    S_{+}^{}\!:\quad A\mapsto AB, \quad B\mapsto A\overline{B}, \quad
   \overline{A}\mapsto \overline{A}\overline{B},\quad
    \overline{B}\mapsto \overline{A}B,
\end{equation}
so that the `bar' operation is compatible with the substitution; see
\cite{BG16} for more on substitutions that feature a `bar-swap
symmetry' of this kind. This substitution is often referred to as the
four-letter RS substitution rule. Clearly, by induction,
this rule gives rise to the concatenation relations
\begin{equation}\label{eq:concat4}
   \begin{split}
    A^{}_{k+1} \, & = \, A^{}_{k} B^{}_{k},\\
    B^{}_{k+1} \, & = \, A^{}_{k} \overline{B^{}_{k}},
\end{split} 
\end{equation}
which have the same structure as Eq.~\eqref{eq:concat}, but work on
the four-letter alphabet $\{A,B,\overline{A},\overline{B}\}$ instead of the two-letter alphabet
$\{1,\overline{1}\}$.  The connection between the two is provided by
the map
\begin{equation}\label{eq:phi} 
     \varphi\!:\; \begin{cases}
       A,B\mapsto 1,\\[0.5ex]
      \overline{A},\overline{B}\mapsto\overline{1}.\end{cases}
\end{equation}

Iterating the substitution rule $S_{+}^{}$ (defined by Eq.~\eqref{eq:S+rule}) on the initial letter $A$
(which corresponds to our case) gives
\[
   A\mapsto AB\mapsto ABA\overline{B}\mapsto 
   ABA\overline{B}AB\overline{A}B\mapsto 
   ABA\overline{B}AB\overline{A}B
   ABA\overline{B}\overline{AB}A\overline{B}\mapsto 
\dotsb \longrightarrow w_{+}^{},
\]
which converges (in the local topology) to an infinite fixed point
word $w_{+}$.\footnote{Here and below we use the initial
  letter $A$ to construct a fixed point sequence $w$. There will
  always be a second fixed point sequence, which due to the bar-swap
  symmetry of our substitutions is just $\overline{w}$, which can be
  obtained by iterating $S_+$ on the initial letter $\overline{A}$.}  We
denote the corresponding hull, which is the closure of the orbit under
the shift, by $\mathbb{X}^{}_{+}$. The binary RS sequence
is then recovered as the image under the factor map $\varphi$ of
Eq.~\eqref{eq:phi}, which reproduces the sequence
$(\varepsilon_{n})^{}_{n\in\mathbb{N}}\in\{\pm 1\}^{\mathbb{N}}$. Note
that there is no two-letter substitution rule for this sequence,
unless you work with a staggered substitution with different rules
for even and odd positions along the word; see
\cite[Sec.~4.7.1]{BG13}. 

The main ingredient in Rudin's proof \cite{Rud59} of the root-$N$
property \eqref{equation:1a} for the binary sequence is the
parallelogram law,
\begin{equation}\label{equation:2c}
   |\alpha+\beta|^{2}+|\alpha-\beta|^{2}
   \, =\, 2|\alpha|^{2}+2|\beta|^{2},
\end{equation}
where $\alpha,\beta\in\mathbb{C}$, and this will also be the case in
our generalisations discussed below.  Note that this means that the
consequences on the spectral properties specifically apply to the
binary sequence $(\varepsilon_{n})^{}_{n\in\mathbb{N}}\in\{\pm
1\}^\mathbb{N}$, and that this argument does not directly provide
information about the spectral properties of the underlying
four-letter sequence obtained from the substitution rule of
Eq.~\eqref{eq:S+rule}.

\section{Modifying Rudin's construction}

Let us now introduce some modifications to the original construction
of Rudin, and show that our newly derived recurrence relations still
satisfy the root-$N$ property of Eq.~\eqref{equation:1a}. Following
this, we compute some concrete examples and derive the corresponding
substitution systems, in the same way as for the RS
sequence above.

We again work with two sequences of polynomials
$(P^{}_{k}(x))_{k\in\mathbb{N}_0}$ and
$(Q^{}_{k}(x))_{k\in\mathbb{N}_0}$, with
$P^{}_{0}(x)=Q^{}_{0}(x)=x$. By introducing additional signs
$\sigma^{}_{k}\in\{\pm 1\}$ in the recurrence relations of
Eq.~\eqref{equation:2a}, we consider
\begin{equation}\label{equation:3a}
\begin{split}
P^{}_{k+1}(x)\, &=\, P^{}_{k}(x)+\sigma_{k}^{}\,x^{2^{k}}Q^{}_{k}(x),\\
Q^{}_{k+1}(x)\, &=\, P^{}_{k}(x)-\sigma_{k}^{}\,x^{2^{k}}Q^{}_{k}(x),
\end{split}
\end{equation}
for $k\in\mathbb{N}_{0}$. At this stage, we do not yet specify the
values of $\sigma_{k}$. 

Clearly, the RS case corresponds to the choice
$\sigma_{k}=1$ for all $k\in\mathbb{N}_{0}$. If instead one chooses
$\sigma_{k}=-1$ for all $k\in\mathbb{N}_{0}$, the recurrence relations
correspond to the substitution
\begin{equation}\label{eq:S-rule}
  S_{-}^{}\!:\quad A\mapsto A\overline{B}, \quad B\mapsto AB, \quad
  \overline{A}\mapsto \overline{A}B,\quad \overline{B}\mapsto
  \overline{AB}.
\end{equation}
Its one-sided fixed point $w^{}_{-}$, obtained by iterating $S_{-}^{}$ on
the letter $A$, 
\[
A \mapsto  A\overline{B} \mapsto A\overline{BAB}
\mapsto A\overline{BABA}B\overline{AB} \mapsto
A\overline{BABA}B\overline{ABA}BAB\overline{A}B\overline{AB}
\mapsto \dotsb \longrightarrow w^{}_{-},
\]
gives rise to the hull $\mathbb{X}^{}_{-}$. It is easy to verify that
$\mathbb{X}^{}_{+}\ne\mathbb{X}^{}_{-}$, since there are subwords of
length six in $w^{}_{+}$ (such as $BABA\overline{BA}$ or
$BAB\overline{A}BA$) which do not occur as subwords of $w^{}_{-}$, and
vice versa. Indeed, the same holds true for the corresponding binary
sequences and their hulls
$\mathbb{Y}^{}_{+}:=\varphi(\mathbb{X}^{}_{+})$ and $\mathbb{Y}^{}_{-}:=\varphi(\mathbb{X}^{}_{-})$. For example,
$\varphi(BABA\overline{BA})=1111\overline{11}$ is a subword of
$\varphi(w^{}_{+})$ but not of $\varphi(w^{}_{-})$, as $ABAB$ does not
appear in either $w^{}_{+}$ or $w^{}_{-}$. Observe that, in fact, $\varphi$ induces
a bijection between the hulls, so $\mathbb{X}^{}_{+}$ and
$\mathbb{Y}^{}_{+}$ (and also $\mathbb{X}^{}_{-}$ and
$\mathbb{Y}^{}_{-}$) are mutually locally derivable, and the
corresponding four-letter and two-letter dynamical systems (under the
shift action) are topologically conjugate; compare
\cite[Rem.~4.11]{BG13}. This can, for instance, be seen by realising
that the subword $1111$, which occurs in both $\varphi(w^{}_{+})$ and
$\varphi(w^{}_{-})$ with bounded gaps, has the unique preimage $BABA$
in both $w^{}_{+}$ and $w^{}_{-}$.  

One can also verify that the substitution matrices for $S^{}_{+}$ and
$S^{}_{-}$ have different eigenvalues ($2$, $\pm\sqrt{2}$ and $0$ for
$S^{}_{+}$ and $2$, $1\pm \mathrm{i}$ and $0$ for $S^{}_{-}$), so the
corresponding substitution dynamical systems cannot be conjugate (as
they have different dynamical zeta functions). However, this does not
answer the question whether $\mathbb{X}^{}_{+}$ and
$\mathbb{X}^{}_{-}$ are mutually locally derivable or not, because
this difference vanishes if you look at the eighth power of the
substitutions.  

\begin{prop}\label{prop:rootN}
  The sequence of coefficients\/
  $(\varepsilon_{n})^{}_{n\in\mathbb{N}}$ of the functions\/ $P_{k}$,
  $k\in\mathbb{N}_{0}$, defined by the recurrence relations of
  Eq.~\eqref{equation:3a}, satisfy the root-$N$ property of
  Eq.~\eqref{equation:1a}.
\end{prop}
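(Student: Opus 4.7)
\emph{Proof proposal.} The plan is to adapt Rudin's original proof essentially verbatim, exploiting the fact that the additional signs $\sigma^{}_{k}\in\{\pm 1\}$ are invisible to the parallelogram law. The key observation is that, for $|x|=1$, one has $|\sigma^{}_{k}x^{2^{k}}Q^{}_{k}(x)|=|Q^{}_{k}(x)|$, so the sign $\sigma^{}_{k}$ simply disappears when one computes the squared moduli on the left-hand side of the parallelogram identity.

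First, I would apply Eq.~\eqref{equation:2c} with $\alpha=P^{}_{k}(x)$ and $\beta=\sigma^{}_{k}x^{2^{k}}Q^{}_{k}(x)$, combined with the recurrence \eqref{equation:3a}, to obtain
\[
   |P^{}_{k+1}(x)|^{2}+|Q^{}_{k+1}(x)|^{2} \, =\, 2\,|P^{}_{k}(x)|^{2}+2\,|Q^{}_{k}(x)|^{2}
\]
for every $x$ with $|x|=1$. Induction on $k$, starting from $|P^{}_{0}(x)|^{2}+|Q^{}_{0}(x)|^{2}=2$, then yields $|P^{}_{k}(x)|^{2}+|Q^{}_{k}(x)|^{2}=2^{k+1}$, and hence $|P^{}_{k}(x)|\leq 2^{(k+1)/2}$ on the unit circle. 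This is the root-$N$ property for the special values $N=2^{k}$, with constant $C=\sqrt{2}$, and it holds uniformly in the choice of signs $(\sigma^{}_{k})$.

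To handle arbitrary $N\in\mathbb{N}$, I would use the standard dyadic truncation trick. Given $N$, choose $k$ with $2^{k}\leq N<2^{k+1}$ and write
\[
   \sum_{n=1}^{N}\varepsilon^{}_{n}x^{n} \, =\, P^{}_{k}(x)+\sigma^{}_{k}x^{2^{k}}R(x),
\]
where $R(x)$ collects the first $N-2^{k}$ terms of $Q^{}_{k}(x)$. Since the coefficients of $Q^{}_{k}$ obey an analogous recurrence (the pair $(P^{}_{k},Q^{}_{k})$ plays a symmetric role, up to signs), a strong induction on $N$ bounds $|R(x)|$ by a constant times $\sqrt{N-2^{k}}$. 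Combining this with the bound on $|P^{}_{k}(x)|$ from the previous paragraph gives $|S^{}_{N}(x)|\leq C\sqrt{N}$ for a universal constant $C$, as required.

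I do not anticipate any genuine obstacle: the only novelty compared with~\cite{Rud59} is the extra sign in \eqref{equation:3a}, and this is absorbed by the modulus in the very first step. The signs $\sigma^{}_{k}$ affect only \emph{which} particular $\pm 1$ sequence $(\varepsilon^{}_{n})$ one generates, not the uniform magnitude of the exponential sums, so the spectral consequences listed after \eqref{equation:1a} apply to every member of this family simultaneously.
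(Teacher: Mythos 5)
Your proposal is correct and follows essentially the same route as the paper: the parallelogram law with $\beta=\sigma^{}_{k}x^{2^{k}}Q^{}_{k}(x)$ gives $|P^{}_{k}(x)|^{2}+|Q^{}_{k}(x)|^{2}=2^{k+1}$ for dyadic $N$, and the general case is handled by the same dyadic truncation. The ``strong induction'' you invoke for the remainder term is exactly the paper's simultaneous inductive bound $|P^{}_{k|m}(x)|,|Q^{}_{k|m}(x)|\leq (2+2^{1/2})\,2^{k/2}$ on partial sums of \emph{both} $P^{}_{k}$ and $Q^{}_{k}$ (one must carry both through the induction, as you note via the symmetry of the pair), and it closes with the same constant.
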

\begin{proof}
The proof proceeds by induction. Consider the case $|x|=1$. 
By the recurrence relations \eqref{equation:3a}, we then have 
\[
 |P^{}_{k+1}(x)|^2+|Q^{}_{k+1}(x)|^{2} \,=\, |P^{}_{k}(x)+
\sigma_{k}^{}\,x^{2^{k}}Q^{}_{k}(x)|^{2}+|P^{}_{k}(x)-
\sigma_{k}^{}\,x^{2^{k}}Q^{}_{k}(x)|^{2}.
\]
Applying the parallelogram law \eqref{equation:2c}, we find that
\[
 |P^{}_{k+1}(x)|^2+|Q^{}_{k+1}(x)|^{2}\, =\, 
  2\bigl( |P^{}_{k}(x)|^{2}+ |Q^{}_{k}(x)|^{2}\bigr).
\]
Since $|P_{0}(x)|^{2}+|Q_{0}(x)|^{2}=2$, we can conclude that
\[
  |P^{}_{k}(x)|^{2}+|Q^{}_{k}(x)|^{2}\, =\, 2^{k+1},
\]
and hence
\begin{equation}\label{equation:3b}
  |P_{k}(x)|\, \leq\, 2^{\frac{k+1}{2}}.
\end{equation} 
This proves the root-$N$ property for $N=2^{k}$.

In order to tackle the case when $N$ is not necessarily a power of $2$, we 
define partial sums of $P^{}_{k}$ and $Q^{}_{k}$ as follows,
\begin{equation}\label{equation:3c}
  P^{}_{k|m}(x)^{}\, =\,\sum_{n=1}^{m}\varepsilon^{}_{n}x^{n},\quad
  Q^{}_{k|m}(x)^{}\, =\, \sum_{n=1}^{m}\gamma^{}_{n}x^{n},
\end{equation}
where $2^{k-1}< m\leq 2^{k}$, $k\in\mathbb{N}_0$, and where $\varepsilon_{n},\gamma_{n}\in\{\pm 1\}$
are the corresponding coefficients. We now show that these satisfy
 \begin{equation}
 \label{equation:3d}
        \left|P^{}_{k|m}(x)\right| \, \leq\, G\,2^{\frac{k}{2}}
\quad \text{and}\quad
        \left|Q^{}_{k|m}(x)\right| \,\leq\, G\,2^{\frac{k}{2}} 
\end{equation} 
for all $|x|=1$ and $k\in\mathbb{N}_{0}$, where $G=2+2^{\frac{1}{2}}$.

The above estimates are obviously true for $k=0$. Suppose that they
hold for some $k\in\mathbb{N}_0$, and consider an integer $m$ with
$2^{k}< m\leq 2^{k+1}$. By using the triangle inequality together with
Eqs.~\eqref{equation:3b} and~\eqref{equation:3d}, we obtain
 \begin{equation}
 \label{equation:3e}
 \bigl|P^{}_{k+1|m}(x)\bigr| \,\leq\, 
  \bigl|P^{}_{k}(x)\bigr|+\bigl|Q^{}_{k|m-2^{k}}(x)\bigr|
 \,\leq\, 2^{\frac{k+1}{2}}+G\,2^{\frac{k}{2}}= \, G\, 2^{\frac{k+1}{2}},
 \end{equation}
which establishes Eq.~\eqref{equation:3d} for $k+1$. The same argument
clearly works for $Q^{}_{k+1|m}(x)$.
 
To complete the proof, suppose that $2^{k-1} < N\leq 2^{k}$. By
Eq.~\eqref{equation:3d}, we have
 \[
  \bigl|P^{}_{k|N}(x)\bigr|\,\leq\, 
  (2+2^{\frac{1}{2}})2^{\frac{k}{2}}\,\leq\,
  2(1+2^{\frac{1}{2}})N^{\frac{1}{2}},
\]
which shows that the root-$N$ property holds.
\end{proof}

\begin{cor}
  Whatever the choice of the signs\/ $\sigma_{k}\in\{\pm 1\}$ in
  Eq.~\eqref{equation:3a}, the corresponding sequence\/
  $(\varepsilon_{n})^{}_{n\in\mathbb{N}}$ is balanced.
\end{cor}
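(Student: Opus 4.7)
The plan is to specialise the root-$N$ property of Proposition~\ref{prop:rootN} to the point $x=1$ on the unit circle. For any $N\in\mathbb{N}$, choose $k\in\mathbb{N}_0$ with $2^{k-1}<N\leq 2^{k}$; then $\sum_{n=1}^{N}\varepsilon^{}_{n} = P^{}_{k|N}(1)$, and Eq.~\eqref{equation:3d} (or equivalently the final bound in the proof of Proposition~\ref{prop:rootN}) gives
\[
  \biggl\lvert\sum_{n=1}^{N}\varepsilon^{}_{n}\biggr\rvert
  \,\leq\, 2\bigl(1+2^{\frac{1}{2}}\bigr)\, N^{\frac{1}{2}}.
\]
I would then read off the consequence for letter frequencies: if $N^{}_{+}$ and $N^{}_{-}$ denote the number of $+1$'s and $-1$'s among the first $N$ entries, respectively, then $N^{}_{+}+N^{}_{-}=N$ and $N^{}_{+}-N^{}_{-}=\sum_{n=1}^{N}\varepsilon^{}_{n}$, whence
\[
   \biggl\lvert\frac{N^{}_{+}}{N}-\frac{1}{2}\biggr\rvert
   \, =\, \biggl\lvert\frac{N^{}_{-}}{N}-\frac{1}{2}\biggr\rvert
   \, =\, \frac{1}{2N}\biggl\lvert\sum_{n=1}^{N}\varepsilon^{}_{n}\biggr\rvert
   \, \leq\, \bigl(1+2^{\frac{1}{2}}\bigr)\, N^{-\frac{1}{2}}\, \longrightarrow\, 0.
\]
Hence the letters $+1$ and $-1$ both occur with asymptotic frequency $\tfrac{1}{2}$, independently of the choice of signs $\sigma^{}_{k}$, which is exactly the balancedness claim.

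There is no serious obstacle here: the corollary is a direct specialisation of Proposition~\ref{prop:rootN}. The only point that requires a line of comment is that the bound obtained at $x=1$ is in fact a \emph{uniform} bound across all prefixes (not just the dyadic ones $N=2^{k}$), but this is already built into the inductive argument behind Eq.~\eqref{equation:3d}, so the uniform $O(N^{1/2})$ control of the partial sums, and thus of the discrepancy $N^{}_{+}-N^{}_{-}$, is immediate.
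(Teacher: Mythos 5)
Your proposal is correct and follows essentially the same route as the paper: both specialise the root-$N$ bound of Proposition~\ref{prop:rootN} to $x=1$, identify $\sum_{n=1}^{N}\varepsilon_{n}$ with $P_{k|N}(1)$, and conclude that the averaged partial sums are $O(N^{-1/2})$ and hence tend to zero. Your additional unpacking of the bound in terms of the letter frequencies $N_{+}/N$ and $N_{-}/N$ is just the definition of balancedness made explicit and does not change the argument.
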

\begin{proof}
The average value of the first $N$ coefficents is given by
\[
 \frac{1}{N} \sum_{n=1}^{N} \varepsilon_{n} \, = \, \frac{1}{N} P^{}_{k|N}(1)
\]
for sufficiently large $k$. By Proposition~\ref{prop:rootN}, this is
bounded by
\[
    \left| \frac{1}{N} \sum_{n=1}^{N} \varepsilon_{n} \right| \, \le \, 
    2 (1+2^{\frac{1}{2}}) N^{-\frac{1}{2}}
\]
which goes to $0$ as $N\to\infty$.
\end{proof}

As mentioned previously, the root-$N$ property implies absolute
continuity of the diffraction for the binary sequence.

\begin{cor}
For any series of coefficients\/  $(\varepsilon_{n})^{}_{n\in\mathbb{N}}$ 
as in Proposition~\emph{\ref{prop:rootN}}, the corresponding
Dirac comb\/ $\sum_{n\in\mathbb{N}}\varepsilon_{n}\delta_{n}$ has
purely absolutely continuous diffraction.\qed
\end{cor}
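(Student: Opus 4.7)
The plan is to deduce absolute continuity of the diffraction directly from the uniform estimate $|P_{k|N}(x)| \le C\sqrt{N}$ on $|x|=1$, established in Proposition~\ref{prop:rootN}. Write $\omega = \sum_{n\in\mathbb{N}} \varepsilon_n \delta_n$ for the weighted Dirac comb, with truncations $\omega_N = \sum_{n=1}^N \varepsilon_n \delta_n$. The natural autocorrelation is
\[
 \gamma \, = \, \lim_{N\to\infty} \frac{1}{N}\, \omega_N * \widetilde{\omega_N},
\]
whose existence follows from the unique ergodicity of the underlying substitution hull, and the diffraction measure is $\widehat{\gamma}$.

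The first step is a Bombieri--Taylor style identification of $\widehat{\gamma}$ as the vague limit of the absolutely continuous measures
\[
 \mu_N \, = \, \frac{1}{N}\, \bigl|P_{k|N}(e^{-2\pi i \xi})\bigr|^2\, d\xi,
\]
where $k$ is chosen with $2^{k-1} < N \le 2^k$. This follows from $|\widehat{\omega_N}(\xi)|^2 = \widehat{\omega_N * \widetilde{\omega_N}}(\xi)$ together with Fourier inversion, and is the formulation used in the absolute continuity criteria of \cite{AL91,CG17a}. Some care is needed to verify that boundary effects coming from the finite truncation vanish in the limit, but this is standard in the diffraction setting.

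The root-$N$ property from Proposition~\ref{prop:rootN} then gives $\bigl|P_{k|N}(e^{-2\pi i \xi})\bigr|^2 \le C^2 N$ uniformly in $\xi$, so each $\mu_N$ is absolutely continuous with density bounded by $C^2$ in $L^{\infty}$-norm. Any vague limit of such a sequence remains absolutely continuous with an $L^{\infty}$-density, by a Banach--Alaoglu argument applied to the sequence of densities in $L^{\infty}$. Consequently $\widehat{\gamma}$ is purely absolutely continuous, with a bounded Radon--Nikodym derivative with respect to Lebesgue measure, which is precisely the claim.

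The main obstacle I anticipate is the identification step rather than the analytic bound: one has to normalise correctly and check that the discrepancy between $\tfrac{1}{N}\, \omega_N * \widetilde{\omega_N}$ and the true autocorrelation $\gamma$ is vaguely negligible, so that passing to Fourier transforms and then to vague limits is justified. Once this bookkeeping is in place, absolute continuity of $\widehat{\gamma}$ follows at once from Proposition~\ref{prop:rootN}.
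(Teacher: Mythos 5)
Your argument is correct and is essentially the route the paper itself relies on: the corollary is stated without proof (only an end-of-proof marker), with the justification deferred to the remark following Eq.~\eqref{equation:1a} and to \cite{AL91,CG17a}, and those references encode precisely your two steps --- the Bombieri--Taylor identification of the diffraction as a vague limit of the densities $\frac{1}{N}\lvert P^{}_{k|N}(e^{-2\pi i\xi})\rvert^2\,d\xi$, followed by the observation that a vague limit of measures with uniformly $L^{\infty}$-bounded densities is absolutely continuous with bounded density. One small caution: for non-periodic sign sequences $(\sigma_k)$ the paper concedes that little is known about the hull, so rather than invoking unique ergodicity to guarantee existence of the autocorrelation you should note that \emph{every} vague accumulation point of $\frac{1}{N}\,\omega_N * \widetilde{\omega_N}$ has Fourier transform dominated by $C^2$ times Lebesgue measure, so the conclusion holds along any convergent subsequence and hence for any autocorrelation the sequence admits.
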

 

We now consider some examples.

\begin{ex}\label{ex:S-+}
  Let us start with the choice $\sigma_{k}^{}=(-1)^{k+1}$, so the
  signs in the recurrence relations for the polynomials alternate, and
  we have
\begin{equation}\label{equation:3g}
\begin{split}
  P^{}_{k+1}(x)\, =\, P^{}_{k}(x)+(-1)^{k+1}x^{2^{k}}Q^{}_{k}(x),\\
  Q^{}_{k+1}(x)\, =\, P^{}_{k}(x)-(-1)^{k+1}x^{2^{k}}Q^{}_{k}(x),
\end{split}
\end{equation}
for $k\in\mathbb{N}_{0}$. We could now read off the corresponding substitution rule just as we did for the RS substitution, but this case is more complicated because of the alternating signs. One way to overcome this problem is
to look at two consecutive steps at once,
\begin{equation}\label{equation:3h}
 \begin{split}
  P^{}_{k+2}(x)\, =\, P^{}_{k}(x) + (-1)^{k+1}x^{2^{k}}Q^{}_{k}(x) + 
                 (-1)^{k+2}x^{2\cdot 2^{k}} P^{}_{k}(x)
                 + x^{3\cdot 2^{k}}Q^{}_{k}(x),\\
  Q^{}_{k+2}(x)\, =\, P^{}_{k}(x) + (-1)^{k+1}x^{2^{k}}Q^{}_{k}(x) - 
                 (-1)^{k+2}x^{2\cdot 2^{k}} P^{}_{k}(x)
                 - x^{3\cdot 2^{k}}Q^{}_{k}(x).
\end{split}
\end{equation}
Choosing $k$ to be even (which corresponds to the case we are
interested in, since our recursion starts with $k=0$) and associating
 letters $A$ and $B$, and their counterparts
$\overline{A}$ and $\overline{B}$, to the sequences corresponding to $P$
and $Q$, we obtain the substitution rule
\begin{equation}
\label{equation:3i}
  S_{-+}^{}\!:\quad 
  A\mapsto A\overline{B}AB,\quad 
  B\mapsto A\overline{B}\overline{A}\overline{B},\quad 
  \overline{A}\mapsto \overline{A}B\overline{A}\overline{B},\quad
  \overline{B} \mapsto \overline{A}BAB.
\end{equation} 
This is a substitution of constant length four, because we used a
double step of the recursion, and Eq.~\eqref{equation:3h} corresponds
to concatenation of four sets of coefficients. As before, a one-sided
fixed point sequence $w^{}_{-+}$ is obtained from iterating the
substitution on the initial letter $A$,
\[
  A\mapsto A\overline{B}AB\mapsto A\overline{B}AB \overline{A}BAB
  A\overline{B}AB A\overline{B}\overline{A}\overline{B}\mapsto\cdots \longrightarrow w_{-+}.
\]
By  mapping $A$, $B$ to $1$ and $\overline{A}$, $\overline{B}$ to
$\overline{1}=-1$ using the map $\varphi$ of Eq.~\eqref{eq:phi}, we
obtain the binary sequence
$v^{}_{-+}=\varphi(w^{}_{-+})=
1\overline{1}11\overline{1}1111\overline{1}111\overline{111}\ldots$
as our new RS-type sequence.

Alternatively, one can see the substitution $S_{-+}^{}$ as the
composition of the two substitutions $S_{+}$ and $S_{-}$ from
Eqs.~\eqref{eq:S+rule} and \eqref{eq:S-rule}, in the sense that
$S_{-+}^{}=S_{-}^{}\circ S_{+}^{}$.  To see this explicitly, let us
verify the composition on the letters $A$ and $B$,
\begin{align*}
&A\xrightarrow{S_{+}^{}}AB\xrightarrow{S_{-}^{}}A\overline{B}AB,\\
&B\xrightarrow{S_{+}^{}}A\overline{B}\xrightarrow{S_{-}^{}}
 A\overline{B}\overline{A}\overline{B},
\end{align*}
with the corresponding result for $\overline{A}$ and $\overline{B}$
following by the bar-swap symmetry.

From Proposition~\ref{prop:rootN}, we conclude that the binary
sequence $v^{}_{-+}=\varphi(w^{}_{-+})$  satisfies the root-$N$ property, and hence 
the corresponding diffraction measure is absolutely continuous.
\end{ex}

Our next example is closely related. We again alternate the signs in
the recursion, but shifted by one. Maybe surprisingly, this 
produces a different sequence of coefficients.

\begin{ex}\label{ex:S+-}
  Here we choose $\sigma_{k}=(-1)^{k}$. The
  recurrence relations are now
\begin{equation}\label{equation:3j}
\begin{split}
P^{}_{k+1}(x)\, =\, P^{}_{k}(x)+(-1)^{k}x^{2^{k}}Q^{}_{k}(x),\\
Q^{}_{k+1}(x)\, =\, P^{}_{k}(x)-(-1)^{k}x^{2^{k}}Q^{}_{k}(x),
\end{split}
\end{equation}
for $k\in\mathbb{N}_{0}$. Using the same approach as above, this 
gives rise to the substitution rule
\begin{equation}
\label{equation:3l}
  S_{+-}^{}\!:\quad 
  A\mapsto AB\overline{A}B,\quad 
  B\mapsto ABA\overline{B},\quad 
  \overline{A}\mapsto \overline{A}\overline{B}A\overline{B},\quad
  \overline{B}\mapsto \overline{A}\overline{B}\overline{A}B.
\end{equation} 
This rule can also be expressed as 
the composition of the two substitution systems $S_{+}$ and $S_{-}$, 
this time as $S_{+-}=S_{+}^{}\circ S_{-}^{}$, because
\begin{align*}
  &A\xrightarrow{S_{-}^{}}A\overline{B}\xrightarrow{S_{+}^{}}AB\overline{A}B,\\
  &B\xrightarrow{S_{-}^{}}AB\xrightarrow{S_{+}^{}}ABA\overline{B},
\end{align*}
and the relations for the barred letters follow by bar-swap
symmetry. Again, Proposition~\ref{prop:rootN} shows that the
corresponding binary sequence $v^{}_{+-}=\varphi(w^{}_{+-})$, where
$w^{}_{+-}$ denotes the fixed point of $S_{+-}$ obtained by iterating
$S_{+-}$ on the letter $A$, satisfies the root-$N$ property and hence
gives rise to a Dirac comb with absolutely continuous diffraction
measure.
\end{ex}

The observations of Examples~\ref{ex:S-+} and \ref{ex:S+-} are in line
with the fact that we use the recurrence relations for the two
different signs, corresponding to $S^{}_{+}$ and $S^{}_{-}$,
alternatingly here, so the net substitution is the composition of
both. Clearly, the two substitutions $S^{}_{-+}$ and $S^{}_{+-}$ and
their respective hulls are closely related.

\begin{lem}
  The hulls\/ $\mathbb{X}_{-+}$ and\/ $\mathbb{X}^{}_{+-}$ of the
  substitutions\/ $S^{}_{-+}$ and\/ $S^{}_{+-}$ defined by
  Eqs.~\eqref{equation:3i} and \eqref{equation:3l} satisfy the
  relations
\begin{align*}
\mathbb{X}^{}_{+-}&= S^{}_{+}(\mathbb{X}^{}_{-+}) \cup 
                    TS^{}_{+}(\mathbb{X}^{}_{-+}),\\
\mathbb{X}^{}_{-+}&= S^{}_{-}(\mathbb{X}^{}_{+-}) \cup
                    TS^{}_{-}(\mathbb{X}^{}_{+-}),
\end{align*}
where\/ $T$ denotes the shift map on
$\{A,\overline{A},B,\overline{B}\}^\mathbb{N}_{}$. 
\end{lem}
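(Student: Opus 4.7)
The plan is to leverage the compositional identities $S^{}_{+-} = S^{}_{+}\circ S^{}_{-}$ and $S^{}_{-+} = S^{}_{-}\circ S^{}_{+}$ established in Examples~\ref{ex:S-+} and \ref{ex:S+-}, together with the fact that both $S^{}_{+}$ and $S^{}_{-}$ are substitutions of constant length $2$.

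The first step is to show that $S^{}_{+}(w^{}_{-+}) = w^{}_{+-}$. Applying $S^{}_{+}$ to the fixed-point equation $S^{}_{-+}(w^{}_{-+}) = w^{}_{-+}$ yields
\[
S^{}_{+-}\bigl(S^{}_{+}(w^{}_{-+})\bigr) \, = \, (S^{}_{+}\circ S^{}_{-})\bigl(S^{}_{+}(w^{}_{-+})\bigr) \, = \, S^{}_{+}(w^{}_{-+}),
\]
so $S^{}_{+}(w^{}_{-+})$ is a fixed point of $S^{}_{+-}$. Since $S^{}_{+}(A) = AB$, this fixed point begins with the letter $A$, and uniqueness of the one-sided $S^{}_{+-}$-fixed point starting at $A$ forces $S^{}_{+}(w^{}_{-+}) = w^{}_{+-}$. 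Symmetrically, $S^{}_{-}(w^{}_{+-}) = w^{}_{-+}$.

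Constant length $2$ then gives the intertwining
\[
T^{2m}\bigl(S^{}_{+}(y)\bigr) \, = \, S^{}_{+}\bigl(T^{m}(y)\bigr) \qquad \text{and} \qquad T^{2m+1}\bigl(S^{}_{+}(y)\bigr) \, = \, T\bigl(S^{}_{+}(T^{m}(y))\bigr)
\]
for any sequence $y$, and analogously for $S^{}_{-}$. Combined with step one, every shift of the $S^{}_{+-}$-fixed point is $T^{n}(w^{}_{+-}) = S^{}_{+}(T^{n/2}(w^{}_{-+}))$ when $n$ is even and $T S^{}_{+}(T^{(n-1)/2}(w^{}_{-+}))$ when $n$ is odd.

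For the forward inclusion in the first identity, any $z \in \mathbb{X}^{}_{+-}$ is a limit of such shifts. Passing to a subsequence on which the parity of $n$ is fixed, and using continuity of $S^{}_{+}$ (it is a sliding block code) together with compactness of $\mathbb{X}^{}_{-+}$ to extract a convergent subsequence of the pre-images, we obtain $z \in S^{}_{+}(\mathbb{X}^{}_{-+}) \cup TS^{}_{+}(\mathbb{X}^{}_{-+})$. Conversely, if $y = \lim T^{m_k}(w^{}_{-+}) \in \mathbb{X}^{}_{-+}$, the intertwining and continuity of $S^{}_{+}$ give $S^{}_{+}(y) = \lim T^{2m_k}(w^{}_{+-}) \in \mathbb{X}^{}_{+-}$ and $T S^{}_{+}(y) = \lim T^{2m_k+1}(w^{}_{+-}) \in \mathbb{X}^{}_{+-}$. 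The second hull identity is proved identically, with the roles of $S^{}_{+}$ and $S^{}_{-}$ swapped. The main subtle point is the uniqueness argument in step one; the rest is routine continuity and parity bookkeeping.
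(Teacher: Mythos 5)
Your proposal is correct and follows essentially the same route as the paper: both use the composition identities to show that $S^{}_{+}w^{}_{-+}$ is a fixed point of $S^{}_{+-}$ (and symmetrically), invoke the commutation relation $S^{}_{\pm}\circ T = T^{2}\circ S^{}_{\pm}$ coming from constant length two, and finish by a continuity/closure argument on the orbit closures. Your write-up is somewhat more explicit than the paper's on two points the paper leaves implicit --- the identification $S^{}_{+}(w^{}_{-+})=w^{}_{+-}$ via uniqueness of the one-sided fixed point starting with $A$, and the compactness/subsequence extraction for the forward inclusion --- but these are elaborations, not a different argument.
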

\begin{proof}
  If $w^{}_{-+}$ and $w^{}_{+-}$ denote fixed point sequences for
  $S^{}_{-+}$ and $S^{}_{+-}$, then the (one-sided) hulls $\mathbb{X}_{-+}$ and
  $\mathbb{X}^{}_{+-}$ are given as the orbit closures of the fixed
  points under the shift map $T$. Now, $S^{}_{-+}=S^{}_{-}\circ
  S^{}_{+}$ implies that
\[
    S^{}_{-+}(S_{-}w^{}_{+-}) \,= \,
   \bigl(S^{}_{-}\circ S^{}_{+}\circ S^{}_{-}\bigr)\, w^{}_{+-}
    \,=\, S^{}_{-}(S^{}_{+-}w^{}_{+-})\, =\, S_{-}w^{}_{+-},
\] 
which shows that $S_{-}w^{}_{+-}$ is a fixed point of $S^{}_{-+}$. Similarly,
since $S^{}_{+-}=S^{}_{+}\circ S^{}_{-}$, we have 
\[
   S^{}_{+-}(S_{+}w^{}_{-+}) \,= \,
   \bigl( S^{}_{+}\circ S^{}_{-}\circ S^{}_{+}\bigr)\, w^{}_{-+}
    \,=\, S^{}_{+}(S^{}_{-+}w^{}_{-+}) \,=\, S_{+}w^{}_{-+},
\]
and consequently $S_{+}w^{}_{-+}$ is a fixed point of $S^{}_{+-}$.  

Since the substitutions $S^{}_{+}$ and $S^{}_{-}$ have constant length two, we have
\[
   S^{}_{+}\circ T\, =\, T^{2} \circ S^{}_{+}\quad\text{and}\quad    S^{}_{-}\circ T\, =\, T^{2} \circ S^{}_{-},
\]
which implies that $S^{}_{+}(\mathbb{X}^{}_{-+})$ is the subset of
$\mathbb{X}^{}_{+-}$ of all sequences starting with a letter $A$ or
$\overline{A}$, since only even shifts are included. By continuity of
the action, limits are included, so the closure does not add any
additional elements. Hence the union $S^{}_{+}(\mathbb{X}^{}_{-+})
\cup TS^{}_{+}(\mathbb{X}^{}_{-+})$ gives the complete hull
$\mathbb{X}^{}_{+-}$, and the analogous result holds for the case
where the signs are interchanged.
\end{proof}

Notice that, despite this close connection, the two hulls
$\mathbb{X}^{}_{-+}$ and $\mathbb{X}^{}_{+-}$ are indeed different, as
can be verified by considering words of length six. Note also that the
eigenvalues of the substitution matrices of $S^{}_{-+}$ and
$S^{}_{+-}$ are again different; they are $4$, $2$ (twice) and $0$ for
$S^{}_{-+}$, and $4$, $\pm 2$ and $0$ for $S^{}_{+-}$. The question of
whether the two hulls are mutually locally derivable remains open.

Still, the following result shows that the two systems are intimately
linked.

\begin{prop}
  $(\mathbb{X}_{-+}, T)$ is conjugate to the induced system of\/
  $(\mathbb{X}_{+-}, T)$ on the subset $S_{+}(\mathbb{X}_{-+})$,
  and\/ $(\mathbb{X}_{+-}, T)$ is conjugate to the induced system
  of\/ $(\mathbb{X}_{-+}, T)$ on the subset\/
  $S_{-}(\mathbb{X}_{+-})$.
\end{prop}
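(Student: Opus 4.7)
The plan is to exhibit the substitution $S^{}_{+}$ itself as the conjugacy from $(\mathbb{X}^{}_{-+},T)$ to the induced system on $S^{}_{+}(\mathbb{X}^{}_{-+})$, and analogously $S^{}_{-}$ for the other direction. The key input is the preceding lemma, whose proof in fact shows that the decomposition $\mathbb{X}^{}_{+-}=S^{}_{+}(\mathbb{X}^{}_{-+})\cup TS^{}_{+}(\mathbb{X}^{}_{-+})$ is a \emph{disjoint} union: every element of $S^{}_{+}(\mathbb{X}^{}_{-+})$ begins with $A$ or $\overline{A}$ (since each image block $AB$, $A\overline{B}$, $\overline{A}\overline{B}$, $\overline{A}B$ starts with such a letter), whereas every element of $TS^{}_{+}(\mathbb{X}^{}_{-+})$ begins with $B$ or $\overline{B}$.

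First I would identify the first-return map. Given $y=S^{}_{+}(w)\in S^{}_{+}(\mathbb{X}^{}_{-+})$, the relation $S^{}_{+}\circ T=T^{2}\circ S^{}_{+}$, already recorded in the proof of the lemma and valid because $S^{}_{+}$ has constant length two, yields
\[
   T^{2}y \, =\, T^{2}S^{}_{+}(w)\, =\, S^{}_{+}(Tw)\, \in\, S^{}_{+}(\mathbb{X}^{}_{-+}),
\]
while $Ty\in TS^{}_{+}(\mathbb{X}^{}_{-+})$ is disjoint from $S^{}_{+}(\mathbb{X}^{}_{-+})$ by the preceding paragraph. Hence the first-return time of every point of $S^{}_{+}(\mathbb{X}^{}_{-+})$ equals $2$, and the induced transformation on this subset is the restriction of $T^{2}$.

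Next I would verify that $\Phi:=S^{}_{+}\colon\mathbb{X}^{}_{-+}\to S^{}_{+}(\mathbb{X}^{}_{-+})$ is a homeomorphism intertwining $T$ with this induced map. Injectivity is immediate: the four images $AB$, $A\overline{B}$, $\overline{A}\overline{B}$, $\overline{A}B$ are pairwise distinct, so any sequence in $S^{}_{+}(\mathbb{X}^{}_{-+})$ can be uniquely parsed into consecutive length-two blocks and decoded back letter-by-letter to its preimage. The same length-two block decoding makes both $\Phi$ and $\Phi^{-1}$ Lipschitz in the product topology, hence homeomorphisms; and the required intertwining $\Phi\circ T=T^{2}\circ\Phi$ is just $S^{}_{+}\circ T=T^{2}\circ S^{}_{+}$. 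This establishes the first conjugacy.

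The second conjugacy follows by a verbatim argument with $S^{}_{-}$ in place of $S^{}_{+}$, using the analogous partition $\mathbb{X}^{}_{-+}=S^{}_{-}(\mathbb{X}^{}_{+-})\sqcup TS^{}_{-}(\mathbb{X}^{}_{+-})$ from the lemma and the relation $S^{}_{-}\circ T=T^{2}\circ S^{}_{-}$; note that $S^{}_{-}$ likewise has four distinct length-two images, so injectivity and the homeomorphism property are equally routine. The only step that requires any thought is the disjointness of the two pieces in each partition, which I would record explicitly by inspecting first letters as above; beyond that, the argument is a direct transcription of the $S^{}_{+}$ case.
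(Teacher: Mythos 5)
Your proof is correct and follows essentially the same route as the paper's: identify the first-return time to $S^{}_{+}(\mathbb{X}^{}_{-+})$ as $2$ via the commutation relation $S^{}_{+}\circ T=T^{2}\circ S^{}_{+}$, and use injectivity of $S^{}_{+}$ to obtain the conjugacy. The only difference is that the paper delegates the final step (that an injective, intertwining $S^{}_{+}$ yields a conjugacy) to a citation, whereas you spell out the block-decoding argument for the homeomorphism explicitly, which is a welcome but not substantively different addition.
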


\begin{proof}
  Here, we prove the first claim; the second follows analogously.  As
  mentioned above, $S_{+}(\mathbb{X}^{}_{-+}) = [A]\cup
  [\,\overline{A}\, ] \subset \mathbb{X}^{}_{+-}$, where the brackets
  denote cylinder sets of words starting with this letter.  Now,
  consider the return time function~\cite[Sec.~2.2]{Dur00}, that is,
  the return time of the fixed point generated by $S_{+-}$ to the
  clopen set $[A]\cup[\,\overline{A}\, ]$,
\[r_{[A]\cup[\,\overline{A}\,]}^{}=\inf\{n>0:
  T^{n}(w_{+-})\in [A]\cup [\,\overline{A}\, ]\}.
\] 
As $S_{+}$ is a substitution of length two, each letter is mapped into
a length two word starting with $A$ or $\overline{A}$ and it follows
$r_{[A]\cup[\,\overline{A}\,]}^{}=2$. The induced map is then given by
$T^{2}$, which maps the set $[A]\cup[\,\overline{A}\, ]$ onto
itself. Hence, $([A]\cup[\, \overline{A}\, ], T^{2})$ is the induced
system. As $S_{+}$ is an injective map from $\mathbb{X}_{-+}$ to
$S_{+}(\mathbb{X}_{-+})$, the claimed conjugacy follows
\cite[Sec.~2.1]{DOP15}.
\end{proof}

By using the following result, the induced systems inherit the
spectral properties of the conjugated systems.

\begin{thm}[{\cite[Thm.~2.9]{Wal82}}]
  Let\/ $T_{i}$ with\/ $i\in\{1,2\}$ be measure-preserving
  transformations of probability spaces. If\/ $T_1$ and\/ $T_2$ are
  conjugate, then they are spectrally isomorphic.
\end{thm}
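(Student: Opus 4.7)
The plan is to exhibit an explicit unitary intertwiner between the Koopman operators induced by $T_1$ and $T_2$ on their $L^2$ spaces, since spectral isomorphism is by definition the unitary equivalence of these associated operators. So first I would unpack what ``conjugate'' means here: there exist full-measure invariant subsets $X_i'\subseteq X_i$ and a bimeasurable bijection $\phi\colon X_1'\to X_2'$ with $\phi_{*}\mu^{}_{1}=\mu^{}_{2}$ and $\phi\circ T^{}_{1}=T^{}_{2}\circ \phi$ on $X_1'$. Discarding null sets, I can assume without loss of generality that $\phi$ is defined everywhere.

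Next I would define the pullback map $U^{}_{\phi}\colon L^{2}(X^{}_{2},\mu^{}_{2})\to L^{2}(X^{}_{1},\mu^{}_{1})$ by $U^{}_{\phi} f = f\circ \phi$. The measure-preservation $\phi^{}_{*}\mu^{}_{1}=\mu^{}_{2}$ immediately gives the isometry property
\[
\int^{}_{X^{}_{1}} |f\circ\phi|^{2}\,\mathrm{d}\mu^{}_{1}
\,=\,\int^{}_{X^{}_{2}} |f|^{2}\,\mathrm{d}\mu^{}_{2},
\]
and since $\phi$ is a bijection mod null sets, $U^{}_{\phi}$ is onto (with inverse $g\mapsto g\circ\phi^{-1}$); hence it is unitary.

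The final step is the intertwining identity. Writing $U^{}_{T^{}_{i}}$ for the Koopman operator $f\mapsto f\circ T^{}_{i}$ on $L^{2}(X^{}_{i},\mu^{}_{i})$, the conjugacy relation $\phi\circ T^{}_{1}=T^{}_{2}\circ\phi$ yields, for every $f\in L^{2}(X^{}_{2},\mu^{}_{2})$,
\[
(U^{}_{T^{}_{1}} U^{}_{\phi} f)(x) \,=\, f(\phi(T^{}_{1}x))
\,=\, f(T^{}_{2}\phi(x))\,=\,(U^{}_{\phi} U^{}_{T^{}_{2}} f)(x),
\]
so $U^{}_{\phi}U^{}_{T^{}_{2}} = U^{}_{T^{}_{1}} U^{}_{\phi}$, establishing the desired unitary equivalence and hence spectral isomorphism.

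The only real subtlety is the null-set bookkeeping around the definition of $\phi$ — one must verify that the invariant full-measure sets on which $\phi$ is genuinely a measure-theoretic isomorphism can be chosen compatibly with both dynamics, which is standard but the one place where care is needed; everything else is a direct calculation flowing from the definition of the Koopman representation.
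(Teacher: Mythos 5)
Your argument is correct, and since the paper simply cites this as \cite[Thm.~2.9]{Wal82} without reproducing a proof, there is nothing internal to compare against; the Koopman-operator intertwining you construct is the standard route. One point worth flagging: in Walters' terminology, ``conjugate'' is the \emph{measure-algebra} notion (an isomorphism of the measure algebras commuting with the induced maps $\tilde{T}_i^{-1}$), whereas the point-map version you unpack --- a bimeasurable, measure-preserving bijection between invariant full-measure sets --- is what Walters calls ``isomorphic'', and Theorem 2.8 there shows isomorphic $\Rightarrow$ conjugate. So you have in effect proved the composite implication ``isomorphic $\Rightarrow$ spectrally isomorphic''. Walters' own proof of Theorem 2.9 works under the weaker hypothesis: the unitary is defined on indicator functions by $\chi_B \mapsto \chi_{\Phi(B)}$ via the measure-algebra isomorphism $\Phi$, then extended by linearity and density, which sidesteps all the null-set bookkeeping you rightly identify as the delicate point of the point-map approach. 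For the application in the paper (a conjugacy realised by an explicit injective map $S_+$ between hulls), your stronger hypothesis is available, so your argument suffices there.
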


Note that, as previously, the four-letter hull $\mathbb{X}^{}_{-+}$
and two-letter hull $\varphi(\mathbb{X}^{}_{-+})$ are mutually locally
derivable (as are $\mathbb{X}^{}_{+-}$ and
$\varphi(\mathbb{X}^{}_{+-})$), and the corresponding dynamical
systems are hence topologically conjugate. The argument is the same as
above; the subword $1111$, which occurs in both $\varphi(w^{}_{-+})$
and $\varphi(w^{}_{+-})$ with bounded gaps, has the unique preimage
$BABA$ in both $w^{}_{-+}$ and $w^{}_{+-}$.

The observations of Examples~\ref{ex:S-+} and \ref{ex:S+-} suggest the
following general picture.

\begin{prop}\label{prop:gen}
  Let\/ $(\sigma^{}_{k})^{}_{k\in\mathbb{N}}\in\{\pm 1\}^{\mathbb{N}_{0}}$
  be a given sequence. Then, for any\/ $k\in\mathbb{N}_{0}$, the
  sequence of coefficients of the polynomial\/ $P_{k}$ defined by
  Eq.~\eqref{equation:3a} is the image under the map\/ $\varphi$ of
  $A^{}_{k}=S^{}_{\sigma^{}_{0}}\circ S^{}_{\sigma^{}_{1}}\circ\dots\circ 
         S^{}_{\sigma^{}_{k-1}}A$.
\end{prop}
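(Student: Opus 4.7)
\medskip

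\noindent\textbf{Proof plan.} The plan is to proceed by induction on $k$, but to prove a slightly stronger statement: simultaneously with the claim for $P_k$, one tracks an auxiliary word $B_k := S^{}_{\sigma^{}_0}\circ S^{}_{\sigma^{}_1}\circ\cdots\circ S^{}_{\sigma^{}_{k-1}}(B)$ and shows that the coefficients of $Q_k$ are exactly $\varphi(B_k)$. This joint formulation is essential, because the polynomial recurrence \eqref{equation:3a} mixes $P_k$ and $Q_k$, and conversely the substitutions $S_{+}$ and $S_{-}$ mix the images of $A$ and $B$; carrying a single-line induction on $P_k$ alone would not close.

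The base case $k=0$ is immediate from $P^{}_0(x)=Q^{}_0(x)=x$, $A^{}_0=A$, $B^{}_0=B$, and $\varphi(A)=\varphi(B)=1$. For the inductive step, the key identity is
\[
   A^{}_{k+1}\, =\, S^{}_{\sigma^{}_0}\circ\cdots\circ S^{}_{\sigma^{}_{k-1}}\bigl(S^{}_{\sigma^{}_k}(A)\bigr),
\]
which rewrites $A^{}_{k+1}$ in terms of a substitution of the one-step image of $A$ under $S^{}_{\sigma^{}_k}$. Since $S^{}_{\sigma^{}_k}(A)$ equals $AB$ when $\sigma^{}_k=+1$ and $A\overline{B}$ when $\sigma^{}_k=-1$, and because every $S^{}_{\pm}$ is a free-monoid morphism compatible with the bar-swap, the outer composition distributes over concatenation and commutes with the bar. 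This yields
\[
   A^{}_{k+1}\,=\,A^{}_k B^{}_k \text{ if } \sigma^{}_k=+1, \qquad A^{}_{k+1}\,=\,A^{}_k\overline{B^{}_k} \text{ if } \sigma^{}_k=-1,
\]
and analogously $B^{}_{k+1}=A^{}_k\overline{B^{}_k}$ or $A^{}_k B^{}_k$ in the two cases.

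These are exactly the concatenation rules that Eq.~\eqref{equation:3a} imposes on the coefficient words: $P^{}_{k+1}(x)=P^{}_k(x)+\sigma^{}_k x^{2^k}Q^{}_k(x)$ says that the coefficient word of $P^{}_{k+1}$ is obtained by concatenating the coefficient word of $P^{}_k$ with $\sigma^{}_k$ times that of $Q^{}_k$, which via $\varphi$ (and the observation that multiplying all letters by $-1$ corresponds to applying the bar) matches the substitution recursion above. Combining with the inductive hypothesis $\varphi(A^{}_k)=\text{coeffs}(P^{}_k)$ and $\varphi(B^{}_k)=\text{coeffs}(Q^{}_k)$ closes the induction.

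I expect the main obstacle to be purely bookkeeping, not conceptual: one must be careful that the proposition composes the $S^{}_{\sigma^{}_j}$ with $\sigma^{}_{k-1}$ applied \emph{first} and $\sigma^{}_0$ applied \emph{last}, so that the induction grows the composition on the innermost (right) slot, as illustrated by the consistency check $S^{}_{-+}=S^{}_{-}\circ S^{}_{+}$ of Example~\ref{ex:S-+} with $\sigma^{}_0=-1$, $\sigma^{}_1=+1$. Once this orientation is fixed, the argument is a straightforward verification that the free-monoid morphism property and the bar-swap compatibility of each $S^{}_{\pm}$ are inherited by arbitrary compositions, together with the matching of the $\pm$ sign in the polynomial recursion with the overbar in the substitution image.
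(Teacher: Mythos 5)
Your proof is correct and follows essentially the same route as the paper's: both derive the concatenation relations $a^{}_{k+1}=a^{}_k b^{}_k$ (resp.\ $a^{}_k\overline{b^{}_k}$) for the coefficient words from the polynomial recursion and match them inductively with the substitution images of $A$ and $B$. You merely make explicit two points the paper's terse proof leaves implicit --- the simultaneous induction on the $Q_k$/$B_k$ track and the fact that the new substitution $S^{}_{\sigma^{}_k}$ enters in the innermost (rightmost) slot of the composition --- and both are handled correctly.
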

\begin{proof}
Let $a^{}_{k}:=\varepsilon^{}_{1}\varepsilon^{}_{2}\dots\varepsilon^{}_{2^{k}}
\in\{\pm 1\}^{2^{k}}$ denote the word of length $2^k$ of coefficients
of $P^{}_{k}(x)$, and $b^{}_{k}$ the corresponding word for
$Q^{}_{k}(x)$.
Then, the recurrence relations of Eq.~\eqref{equation:3a}
correspond to the concatenation relations
\begin{equation}\label{eq:concat2}
    a^{}_{k+1} \,  = \, \begin{cases}  a^{}_{k}  b^{}_{k}, & \sigma^{}_{k}=1, \\
                        a^{}_{k} \overline{b^{}_{k}}, & \sigma^{}_{k}=-1,
                       \end{cases}\qquad
    b^{}_{k+1} \,  = \,  \begin{cases}  a^{}_{k}  \overline{b^{}_{k}}, & 
                       \sigma^{}_{k}=1, \\
                        a^{}_{k} b^{}_{k}, & \sigma^{}_{k}=-1,
                       \end{cases}
\end{equation}
with initial values $a_{0}=b_{0}=1$. These recurrence relations
correspond to the substitution rule $S^{}_{\sigma^{}_{k}}$, and by
induction we obtain $a^{}_{k}=\varphi(A^{}_{k})$ with
\[
    A^{}_{k} \, = \, S^{}_{\sigma^{}_{0}}\circ \dots\circ S^{}_{\sigma^{}_{k-1}} A
\]  
for any $k\in\mathbb{N}_{0}$.  
\end{proof}

Clearly, if we choose $\sigma^{}_{k}=1$ for all $k\in\mathbb{N}_{0}$,
we are back at the RS case with substitution
$S^{}_{+}$. More generally, for any periodic sequence we have the
following result.

\begin{cor}\label{cor:gen}
  Let\/ $(\sigma^{}_{k})^{}_{k\in\mathbb{N}_{0}}\in\{\pm 1\}^{\mathbb{N}_{0}}$
  be a periodic sequence of period\/ $p$, so\/
  $\sigma^{}_{k+p}=\sigma^{}_{k}$ for all $k\in\mathbb{N}_{0}$. Then, the
  sequence of coefficients of the polynomials\/ $P_{k}$ defined by
  Eq.~\eqref{equation:3a} is the image under the map\/ $\varphi$ of
  the fixed point of the substitution
\[
  S^{}_{\sigma^{}_{0}\sigma^{}_{1}\dots\sigma^{}_{p-1}} \, := \, 
  S^{}_{\sigma^{}_{0}}\circ S^{}_{\sigma^{}_{1}}\circ\dots\circ S^{}_{\sigma^{}_{p-1}}
\]
with initial letter $A$.
\end{cor}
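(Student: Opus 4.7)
The plan is to derive the corollary as a direct consequence of Proposition~\ref{prop:gen}, specialised to the arithmetic subsequence $k=np$ and then passed to the limit in the local topology. First, by Proposition~\ref{prop:gen}, the length-$2^k$ word of coefficients of $P_k$ is $\varphi(A_k)$ with $A_k=S^{}_{\sigma^{}_0}\circ\dots\circ S^{}_{\sigma^{}_{k-1}}A$. From the recurrence~\eqref{equation:3a}, the first $2^k$ coefficients of $P^{}_{k+1}$ agree with those of $P^{}_{k}$, so the finite words $\varphi(A^{}_k)$ are prefixes of one another and converge in the local topology to a well-defined one-sided infinite coefficient sequence, which is the sequence referred to in the statement.

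Next, I would invoke the periodicity $\sigma^{}_{k+p}=\sigma^{}_{k}$ to regroup the composition in blocks of length $p$. For every $n\in\mathbb{N}^{}_0$ this gives
\[
   A^{}_{np} \, = \, S^{}_{\sigma^{}_0}\circ\dots\circ S^{}_{\sigma^{}_{np-1}}A
   \, = \, \bigl(S^{}_{\sigma^{}_0}\circ\dots\circ S^{}_{\sigma^{}_{p-1}}\bigr)^{n} A
   \, = \, S_{\sigma^{}_{0}\sigma^{}_{1}\dots\sigma^{}_{p-1}}^{n}A ,
\]
so along this subsequence the word $A^{}_{np}$ is simply the $n$th iterate of the composed substitution $S^{}_{\sigma^{}_{0}\sigma^{}_{1}\dots\sigma^{}_{p-1}}$ applied to $A$.

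Finally, I would observe that both $S^{}_{+}$ and $S^{}_{-}$ map $A$ to a two-letter word beginning with $A$ (namely $AB$ and $A\overline{B}$ respectively), hence $S^{}_{\sigma^{}_{0}\sigma^{}_{1}\dots\sigma^{}_{p-1}}$ maps $A$ to a word of length $2^p$ beginning with $A$. Consequently the iterates $S_{\sigma^{}_{0}\sigma^{}_{1}\dots\sigma^{}_{p-1}}^{n}A$ form an increasing chain of prefixes, and therefore converge in the local topology to the (unique) one-sided fixed point $w$ of $S^{}_{\sigma^{}_{0}\sigma^{}_{1}\dots\sigma^{}_{p-1}}$ with initial letter $A$. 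Since $\varphi$ acts letterwise and is continuous in the local topology, $\varphi(A^{}_{np})\to\varphi(w)$, and this limit coincides with the infinite coefficient sequence identified in the first paragraph.

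There is no genuine obstacle; the argument is essentially a bookkeeping exercise combining the single-step identification of Proposition~\ref{prop:gen} with the elementary fact that the associative composition $S^{}_{\sigma^{}_{0}}\circ\dots\circ S^{}_{\sigma^{}_{np-1}}$ collapses to $S_{\sigma^{}_{0}\dots\sigma^{}_{p-1}}^{n}$ under the periodicity hypothesis. The only point one should be explicit about is that each $S^{}_{\pm}$ fixes the first letter $A$ of its image of $A$, which is what guarantees convergence of the iterates to a fixed point rather than merely to a shift-orbit.
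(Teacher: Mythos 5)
Your proof is correct and follows essentially the same route as the paper: the paper's own proof consists precisely of the regrouping $A^{}_{np}=(S^{}_{\sigma^{}_{0}}\circ\dots\circ S^{}_{\sigma^{}_{p-1}})^{n}A$ via Proposition~\ref{prop:gen} and periodicity, followed by ``the assertion follows.'' Your additional observations (that the words $\varphi(A^{}_{k})$ are nested prefixes, and that $S^{}_{\pm}$ send $A$ to a word beginning with $A$ so the iterates converge to the one-sided fixed point) simply make explicit the details the paper leaves to the reader.
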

\begin{proof}
As the sequence of signs $\sigma^{}_{k}$ is
periodic with period $p$, Proposition~\ref{prop:gen} implies that
\[
    A^{}_{np} \, = \, (S^{}_{\sigma^{}_{0}}\circ \dots\circ 
   S^{}_{\sigma^{}_{p-1}})^n A 
\]
holds for $n\in\mathbb{N}$, and the assertion follows.
\end{proof}

If the sequence is not periodic, we are in a situation that resembles
the case of random substitutions considered in \cite{MM14}. But even
then, we still have convergence in the local topology to a
well-defined binary and quaternary sequence. However, the latter is no
longer a fixed point of a primitive substitution of finite length, so
we do not know much about the corresponding hull.  Nevertheless, the
root-$N$ property and hence absolute continuity of the spectrum also
hold in this case.


\begin{ex}
Let us consider one more example, with
\[
  \sigma_{k}^{} = \begin{cases} 1, &\mbox{if } k \equiv 0, 1 \bmod 3,\\ 
  -1,& \mbox{if } k \equiv 2 \bmod 3.\end{cases}\
\] 
From Proposition~\ref{prop:gen}, we know that the corresponding
substitution is $S^{}_{++-}=S^{}_{+}\circ S^{}_{+}\circ S^{}_{-}$,
which turns out to be
\begin{align*}
&A\xrightarrow{S_{-}^{}}A\overline{B}
\xrightarrow{S_{+}^{}}AB\overline{A}B\xrightarrow{S_{+}^{}}
ABA\overline{BAB}A\overline{B}\\
&B\xrightarrow{S_{-}^{}}AB\xrightarrow{S_{+}^{}}
ABA\overline{B}\xrightarrow{S_{+}^{}}ABA\overline{B}AB\overline{A}B
\end{align*}
together with the corresponding relations for the barred letters.
\end{ex}

\begin{prop}
  Let\/ $(\sigma^{}_{k})^{}_{k\in\mathbb{N}} \in \{\pm
  1\}^{\mathbb{N}}$ be a periodic sequence of period\/ $p$ and\/
  $S^{}_{\sigma^{}_{1}\sigma^{}_{2}\dots\sigma^{}_{p}}$ be the
  corresponding substitution according to Corollary~\ref{cor:gen}.
  Its hull $\mathbb{X}^{}_{\sigma^{}_{1}\sigma^{}_{2} \dots
    \sigma^{}_{p}}$ is then mutually locally derivable with
  $\varphi(\mathbb{X}^{}_{\sigma^{}_{1}\sigma^{}_{2} \dots
    \sigma^{}_{p}})$.
\end{prop}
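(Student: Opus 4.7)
The plan is to follow the strategy the paper has already used in its base cases: exhibit a short binary pattern with a unique $4$-letter preimage and use it to locally invert the map $\varphi$. Because $\varphi$ is itself a letter-to-letter factor map, one direction is immediate: $\varphi(\mathbb{X}^{}_{\sigma^{}_{1}\sigma^{}_{2}\dots\sigma^{}_{p}})$ is locally derivable from $\mathbb{X}^{}_{\sigma^{}_{1}\sigma^{}_{2}\dots\sigma^{}_{p}}$. The work lies in the reverse direction, where I need to recover the $4$-letter sequence from the binary one via a sliding block code.

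I would first prove, by induction on $p$, a structural lemma: in every sequence of $\mathbb{X}^{}_{\sigma^{}_{1}\sigma^{}_{2}\dots\sigma^{}_{p}}$ the letters at consecutive positions alternate between $A$-type (in $\{A,\overline{A}\}$) and $B$-type (in $\{B,\overline{B}\}$). The point is that each $S^{}_{\pm}$ sends every letter to a two-letter word whose first letter is $A$-type and whose second letter is $B$-type, and this alternation is preserved under composition. I would then rule out the $4$-letter subword $ABAB$: by the alternation lemma it must begin at an $A$-type position, so its two length-$2$ halves are each images under $S^{}_{\sigma^{}_{1}}$ of a single letter. Since only one letter is mapped to $AB$ by each of $S^{}_{+}$ and $S^{}_{-}$, this forces two consecutive letters at the next level to coincide, contradicting the alternation lemma applied to $S^{}_{\sigma^{}_{2}\sigma^{}_{3}\dots\sigma^{}_{p}}$. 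Bar-swap symmetry then also excludes $\overline{ABAB}$, so the binary factor $1111$ in $\varphi(w^{}_{\sigma^{}_{1}\sigma^{}_{2}\dots\sigma^{}_{p}})$ can only lift to $BABA$, and $\overline{1111}$ only to $\overline{BABA}$; in either case the parity of the starting position, and hence the entire $A$-type/$B$-type pattern, is pinned down.

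To close the argument I would verify, by a short explicit iteration of $S^{}_{\sigma^{}_{1}\sigma^{}_{2}\dots\sigma^{}_{p}}$ starting from the letter $A$, that at least one of $BABA$ or $\overline{BABA}$ appears in $w^{}_{\sigma^{}_{1}\sigma^{}_{2}\dots\sigma^{}_{p}}$. By primitivity this subword then recurs with bounded gaps in every sequence of the hull, so a sufficiently large but fixed local window is guaranteed to contain an occurrence. From this occurrence I can read off the alignment, and thus the type of every position in the window, which combined with the sign already supplied by $\varphi$ reconstructs the $4$-letter letter. This gives the required sliding block code inverse to $\varphi$ and establishes the mutual local derivability. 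I expect the main obstacle to be precisely this final verification step: showing that $BABA$ or $\overline{BABA}$ appears uniformly in the sign sequence $(\sigma^{}_{1},\ldots,\sigma^{}_{p})$ seems to call for a short case analysis rather than an elegant one-line argument, and one may need to increase the iteration depth with $p$ to be sure the pattern is already present.
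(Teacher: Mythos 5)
Your proposal follows essentially the same route as the paper: one direction is immediate because $\varphi$ is a letter-to-letter factor map, and the other rests on showing that $1111$ has the unique legal four-letter preimage $BABA$, with your alternation lemma making explicit what the paper uses implicitly to cut the candidates down to $ABAB$ and $BABA$ and to rule out $ABAB$ via its impossible preimages $AA$ and $BB$. The one step you flag as a potential obstacle is dealt with in the paper simply by observing that $BABA$ is legal for $S_{+}$, $S_{-}$ and their compositions, hence legal for $S_{\sigma_{1}\dots\sigma_{p}}$, and then invoking repetitivity of the hull to get bounded gaps, so no $p$-dependent iteration depth is required.
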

\begin{proof}
  Local derivability of the two-letter sequence from the four-letter
  sequence is clear, as $\varphi$ acts locally.

  To show local derivability of the four-letter sequence, note that
  $BABA$ is a legal four-letter word for $S^{}_{+}$ and $S^{}_{-}$ as
  well as for $S^{}_{-+}=S^{}_{-}\circ S^{}_{+}$ and
  $S^{}_{+-}=S^{}_{+}\circ S^{}_{-}$. Hence, it is also legal for
  $S^{}_{\sigma^{}_{1}\sigma^{}_{2}\dots\sigma^{}_{p}}=S^{}_{\sigma^{}_{1}}\circ
  S^{}_{\sigma^{}_{2}}\circ\dots\circ S^{}_{\sigma^{}_{p}}$, and
  occurs with bounded gaps in any element of the hull
  $\mathbb{X}^{}_{\sigma^{}_{1}\sigma^{}_{2}\dots\sigma^{}_{p}}$ by
  repetitivity of the hull. Since $\varphi(BABA)=1111$, the latter
  also occurs with bounded gaps in any element of the two-letter hull.

  Now, note that $ABAB$ is not a legal word for $S^{}_{+}$ or $S^{}_{-}$ (as
  its pre-image would have to be $AA$ or $BB$), or for
  $S^{}_{-+}=S^{}_{-}\circ S^{}_{+}$ or $S^{}_{+-}=S^{}_{+}\circ
  S^{}_{-}$. As a consequence, it cannot occur as a legal word for
  $S^{}_{\sigma^{}_{1}\sigma^{}_{2}\dots\sigma^{}_{p}}=S^{}_{\sigma^{}_{1}}\circ
  S^{}_{\sigma^{}_{2}}\circ\dots\circ S^{}_{\sigma^{}_{p}}$ either.

  Hence $BABA$ is the unique pre-image of $1111$, and local
  derivability follows.
\end{proof}

\section{Generalizing Rudin's argument to Fourier matrices}

We are now going to generalise Rudin's argument further by considering
\emph{complex} coefficients in our polynomials, which will naturally
lead us to look at Fourier matrices. Here, the Fourier matrix of order
$n$ is the unitary $n\!\times\! n$ matrix with elements
$\frac{1}{\sqrt{n}}\exp\bigl(\frac{2\pi
  \mathrm{i}(j-1)(k-1)}{n}\bigr)$, where $1\le j,k\le n$. The matrices
that are going to enter below will be $x$-dependent generalisations of
these Fourier matrices, without the normalisation factor $1/\sqrt{n}$.

It will be convenient to express the recurrence
relations \eqref{equation:2a} in terms of matrices as follows,
\[
\begin{pmatrix}
  P^{}_{k+1}(x)\\
  Q^{}_{k+1}(x)
 \end{pmatrix} \, =\,  
 \begin{pmatrix}
  1 &   x^{2^k} \\
 1 &  -x^{2^{k}}
 \end{pmatrix}\begin{pmatrix}
   P^{}_{k}(x)\\
   Q^{}_{k}(x)\end{pmatrix}
\, =\,  
 A^{(2,k)}
  \begin{pmatrix}
   P^{}_{k}(x)\\
   Q^{}_{k}(x)\end{pmatrix}.
\] 
Now, for $n>2$, consider a 
vector of $n$ polynomials
\[ 
 v^{}_{k}\, =\, \begin{pmatrix}
  P_{k}^{(1)}(x)\\
  \vdots\\
  P_{k}^{(n)}(x)
 \end{pmatrix}
\] 
satisfying the recurrence relation
\begin{equation}\label{equation:4a}
 v^{}_{k+1}\, =\, A^{(n,k)} v^{}_{k},
 \end{equation} 
 with initial condition $v^{}_{0}=(x,\ldots,x)^t$.  Here, $A^{(n,k)}$ is
 the $n\!\times\! n$ matrix
\begin{equation}\label{eq:genmat}
A^{(n,k)} \, =\, \begin{pmatrix}
   1 & x^{n^k} & \cdots & x^{(n-1)n^{k}} \\
   1 & \omega x^{n^k} & \cdots & \omega^{n-1} x^{(n-1)n^{k}} \\
   \vdots  & \vdots & \ddots & \vdots  \\
   1 & \omega^{n-1}x^{n^k} & \cdots & \omega^{(n-1)^2} x^{(n-1)n^{k}}
 \end{pmatrix},
\end{equation}
where $\omega=\exp(2\pi \mathrm{i}/n)$. For $x=1$, $A^{(n,k)}$ reduces
to the $n\!\times\! n$ Fourier matrix, apart from the normalisation
factor $1/\sqrt{n}$.  As a consequence, for $|x|=1$, the matrix
satisfies $(A^{(n,k)})^{\dagger}A^{(n,k)}=n\, \boldsymbol{1}^{(n)}$,
where $\boldsymbol{1}^{(n)}$ denotes the $n\!\times\! n$ unit matrix
and $M^{\dagger}=\overline{M}^{t}$ denotes the Hermitian adjoint of
the matrix (or vector) $M$.

Generalising Eq.~\eqref{equation:2b}, we can now define a sequence
$\bigl(\varepsilon^{}_{m}\bigr)_{m\in\mathbb{N}}$ of complex
coefficients $\varepsilon^{}_{m}\in\bigl\{\omega^{j} \mid 0\le j \le
n-1\bigr\}$ by
\begin{equation}\label{eq:Pcomplex}
    P_{k}^{(1)}(x) \, = \, \sum_{m=1}^{n^k}\varepsilon_{m}x^{m}.
\end{equation}
We shall show that these sequences also satisfy the root-$N$ property
of Eq.~\eqref{equation:1a}. It turns out that we will exploit the
unitarity of the Fourier matrix.

\begin{thm}\label{thm:4}
  The sequence of coefficicents\/
  $\bigl(\varepsilon^{}_{m}\bigr)_{m\in\mathbb{N}}$ defined by
  Eq.~\eqref{eq:Pcomplex} together with the recurrence relation of
  Eq.~\eqref{equation:4a} satisfies the root-$N$ property of
  Eq.~\eqref{equation:1a}.
\end{thm}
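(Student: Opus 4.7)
The plan is to mimic the proof of Proposition~\ref{prop:rootN} step by step, replacing the parallelogram law \eqref{equation:2c} by the unitarity relation $(A^{(n,k)})^{\dagger}A^{(n,k)}=n\,\boldsymbol{1}^{(n)}$ already established in the text. First I would fix $|x|=1$ and compute
\[
   \|v^{}_{k+1}\|^{2} \, = \, \bigl\langle A^{(n,k)}v^{}_{k},\,A^{(n,k)}v^{}_{k}\bigr\rangle \, = \, v^{\dagger}_{k}\bigl(A^{(n,k)}\bigr)^{\dagger}A^{(n,k)}v^{}_{k} \, = \, n\,\|v^{}_{k}\|^{2}.
\]
Since $\|v^{}_{0}\|^{2}=n$, an immediate induction gives $\sum_{j=1}^{n}\bigl|P^{(j)}_{k}(x)\bigr|^{2}=n^{k+1}$, and in particular
\[
   \bigl|P^{(1)}_{k}(x)\bigr| \,\leq\, n^{(k+1)/2}
\]
for every $k\in\mathbb{N}_{0}$. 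This handles the root-$N$ property whenever $N=n^{k}$.

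Next I would extend the bound to arbitrary $N$ by a partial-sum induction, in complete analogy with \eqref{equation:3c}--\eqref{equation:3e}. Writing $P^{(j)}_{k|m}(x)=\sum_{l=1}^{m}(\cdots)x^{l}$ for the truncation at degree $m$, the block structure of the recurrence \eqref{equation:4a} shows that for $(i-1)n^{k}<m\leq in^{k}$ one has
\[
   P^{(j)}_{k+1|m}(x) \, = \, \sum_{l=1}^{i-1}\omega^{(j-1)(l-1)}x^{(l-1)n^{k}}P^{(l)}_{k}(x)\,+\,\omega^{(j-1)(i-1)}x^{(i-1)n^{k}}P^{(i)}_{k|m-(i-1)n^{k}}(x).
\]
Applying the triangle inequality, together with the full-sum bound above and the inductive hypothesis $\bigl|P^{(l)}_{k|m'}(x)\bigr|\leq G\,n^{k/2}$, yields
\[
   \bigl|P^{(j)}_{k+1|m}(x)\bigr| \,\leq\, (n-1)\,n^{(k+1)/2}+G\,n^{k/2}.
\]
Choosing $G=n+\sqrt{n}$ makes the right-hand side equal to $G\,n^{(k+1)/2}$, so the induction closes. (Note that $G=2+\sqrt{2}$ in the $n=2$ case of Proposition~\ref{prop:rootN} is exactly this constant.)

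Finally, for any $N$ with $n^{k-1}<N\leq n^{k}$, the partial-sum bound gives $\bigl|P^{(1)}_{k|N}(x)\bigr|\leq(n+\sqrt{n})\,n^{k/2}\leq(n+\sqrt{n})\sqrt{n}\,N^{1/2}$, establishing \eqref{equation:1a} with constant $C=n+\sqrt{n}\cdot\sqrt{n}=\sqrt{n}(n+\sqrt{n})=n^{3/2}+n$.

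I expect the only delicate point to be the bookkeeping in the partial-sum step: one has to identify which of the $n$ blocks the cut-off $m$ falls into and keep track of the phase factors $\omega^{(j-1)(i-1)}$, which fortunately drop out after the triangle inequality because they all have modulus one. Once that is done, the rest is a direct transcription of the binary argument, with unitarity of the Fourier matrix playing the role of the parallelogram identity.
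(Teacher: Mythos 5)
Your proposal is correct and coincides with the paper's own proof in every essential respect: the unitarity identity $(A^{(n,k)})^{\dagger}A^{(n,k)}=n\,\boldsymbol{1}^{(n)}$ replaces the parallelogram law to give $\bigl|P^{(1)}_{k}(x)\bigr|\leq n^{(k+1)/2}$ for $N=n^{k}$, and the block decomposition of the partial sums with $G=n+n^{1/2}$ closes the induction exactly as in the paper, yielding the same final constant $n^{3/2}+n=n(n^{1/2}+1)$. Nothing further is needed.
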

\begin{proof}
  The proof proceeds by induction.  We want to derive a bound for
  $|P_{k+1}^{(1)}(x)|$. To do so, we express the sum of the squared
  norms of the polynomials $P_{k+1}^{(1)}(x),\ldots, P_{k+1}^{(n)}(x)$
  as
\[
  v_{k+1}^{\dagger}v^{}_{k+1} \, =\, 
  \sum_{j=1}^{n} \big\lvert P_{k+1}^{(j)}(x)\big\rvert^{2}.
\]
Using the recurrence relation and the identity $(A^{(n,k)})^{\dagger}A^{(n,k)}=n\, \boldsymbol{1}^{(n)}$, we obtain
\[
  v_{k+1}^{\dagger}v^{}_{k+1} \, =\, 
  v_{k}^{\dagger}(A^{(n,k)})^{\dagger}A^{(n,k)}v^{}_{k} \, =\, 
  n v_{k}^{\dagger}v^{}_{k} \, =\, 
  n\biggl(\sum_{j=1}^{n}\big\lvert P_{k}^{(j)}(x)\big\rvert^2\biggr).
\]
This shows that 
\[
  \sum_{j=1}^{n}\big\lvert P_{k+1}^{(j)}(x)\big\rvert^{2}=
  n\biggl(\sum_{j=1}^{n}\big\lvert P_{k}^{(j)}(x)\big\rvert^2\biggr).
\]
Since we have $\sum_{j=1}^{n}|P_{0}^{(j)}(x)|^{2}=n$ by the
initial conditions, we conclude by induction that
\[
  \sum_{j=1}^{n}\big\lvert 
   P_{k}^{(j)}(x)\big\rvert^{2}\, =\, n^{k+1}.
\]
Hence we get the bound
\begin{equation}\label{equation:4b}
  \big\lvert P_{k}^{(j)}(x)\big\rvert \, \leq\,
   n^{\frac{k+1}{2}},
\end{equation}
and in particular $\big\lvert P_{k}^{(1)}(x)\big\rvert \, \leq\,
n^{\frac{1}{2}}n^{\frac{k}{2}}$, which proves the root-$N$ property
for $N=n^{k}$.

It remains to prove the property for other values of $N$.  The
argument is similar to that used in the proof of
Proposition~\ref{prop:rootN}. Let $P_{k|m}^{(j)}$ denote
the $m$-th partial sum of $P_{k}^{(j)}$ for $1\le j\le n$, where
$n^{k-1}< m\leq n^{k}$. We will prove by induction that these functions
satisfy
 \begin{equation}\label{equation:4c}
   \big\lvert P_{k|m}^{(j)}(x)\big\rvert
   \,\leq\, G\,n^{\frac{k}{2}} 
\end{equation}
for all $|x|=1$ and $k\in\mathbb{N}_{0}$, where $G=n+n^\frac{1}{2}$.

Clearly, this estimate is true if $k=0$. Suppose now that
Eq.~\eqref{equation:4c} holds for some $k\in\mathbb{N}_0$, and
consider an integer $m$ with $n^k< m\leq n^{k+1}$. For $n^{k}<m\leq
2n^{k}$, by using the recursion \eqref{equation:4a} as well as the
triangle inequality together with Eqs.~\eqref{equation:4b} and
\eqref{equation:4c}, we obtain
\[
 \big\lvert P_{k+1|m}^{(j)}(x)\big\rvert  \, \leq\,
  \big\lvert P_{k}^{(1)}(x)\big\rvert +
  \big\lvert \omega^{j-1} x^{n^k} P_{k|m-n^{k}}^{(2)}(x)\big\rvert 
  \,\leq\,
  n^{\frac{k+1}{2}}+G\,n^{\frac{k}{2}} \, \leq\, G\,n^\frac{k+1}{2}
\]
for all $1\le j\le n$.

Similarly, we can derive bounds for the cases where $\ell n^{k}<m\leq
(\ell+1)n^{k}$ for all $1\le \ell\le n-1$, where more and more terms
contribute. We obtain
\begin{align*}
 \big\lvert P^{(j)}_{k+1|m}(x)\big\rvert \, &=\, 
\bigg\lvert \sum_{r=1}^{\ell} \omega^{(r-1)(j-1)}x^{(r-1) n^{k}}P_{k}^{(r)}(x) 
 \, + \, \omega^{\ell(j-1)}x^{\ell n^{k}}P_{k|m-\ell n^k}^{(\ell+1)}(x) \bigg\rvert\\
&\leq\,
\sum_{r=1}^{\ell} \big\lvert \omega^{(r-1)(j-1)}x^{(r-1) n^{k}}P_{k}^{(r)}(x)\big\rvert
 \, + \, \big\lvert\omega^{\ell(j-1)}x^{\ell n^{k}}
  P_{k|m-\ell n^k}^{(\ell+1)}(x) \big\rvert\\
&\leq\,
\sum_{r=1}^{\ell} \big\lvert P_{k}^{(r)}(x)\big\rvert
 \, + \, \big\lvert  P_{k|m-\ell n^k}^{(\ell+1)}(x) \big\rvert\\[2mm]
&\leq\, \ell\, n^{\frac{k+1}{2}} + G \, n ^{\frac{k}{2}} \\
&\leq\, ((n-1)+(n^\frac{1}{2}+1))\, n^{\frac{k+1}{2}} \, = \, G\, n^{\frac{k+1}{2}},
\end{align*}
which completes the induction argument.

To finish the proof, suppose that $n^{k-1} < N\leq n^{k}$. By
 Eq.~\eqref{equation:4c}, we have
 \[
  \bigl|P_{k|N}^{(1)}(x)\bigr|\,\leq\, 
  (n+n^{\frac{1}{2}})n^{\frac{k}{2}}\,\leq\,
  n(n^{\frac{1}{2}}+1)N^{\frac{1}{2}},
\]
which shows that the root-$N$ property holds.
\end{proof}
 
\begin{cor}\label{iic}
For any series of coefficients\/  $(\varepsilon_{n})^{}_{n\in\mathbb{N}}$ 
as in Theorem~\emph{\ref{thm:4}}, the corresponding
Dirac comb\/ $\sum_{n\in\mathbb{N}}\varepsilon_{n}\delta_{n}$ has
purely absolutely continuous diffraction.\qed
\end{cor}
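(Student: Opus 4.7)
The corollary follows immediately from Theorem~\ref{thm:4} once one invokes the standard bridge between uniform bounds on trigonometric sums and absolute continuity of the diffraction. My plan is therefore very short: identify the right trigonometric sum, translate the root-$N$ bound into a uniform $L^\infty$ estimate on the natural approximating densities of the diffraction, and then pass to the limit.

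First, I would recall that the (centred) autocorrelation of the Dirac comb $\omega=\sum_{n\in\mathbb{N}}\varepsilon^{}_n\delta^{}_n$ is, by Hof's construction, a volume-averaged limit of $\omega^{}_N*\widetilde{\omega^{}_N}/(2N{+}1)$, and that its Fourier transform (the diffraction measure) arises as the vague limit of the measures
\[
   \mu^{}_N \, := \, \frac{1}{N}\,\bigl\lvert S^{}_N(x)\bigr\rvert^{2} \, dx,
   \qquad
   S^{}_N(x) \, := \, \sum_{n=1}^{N} \varepsilon^{}_n\, e^{2\pi\mathrm{i} nx}.
\]
(This is the content of \cite[Sec.~4.7.1]{BG13}, and is exactly the setup used in \cite{AL91,CG17a}.) So identifying the diffraction with the weak-$*$ limit of the $\mu^{}_N$ is the first, purely bookkeeping, step.

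Second, I would apply Theorem~\ref{thm:4}. Setting $x=e^{2\pi\mathrm{i}\xi}$ in $P^{(1)}_{k|N}$ gives $S^{}_N(\xi)$, so the root-$N$ property yields a constant $C$, independent of $N$ and of the choice of sign sequence underlying $\varepsilon^{}_n$, with
\[
   \bigl\lvert S^{}_N(\xi)\bigr\rvert^{2} \, \leq \, C^{2}\,N
   \qquad\text{uniformly in } \xi.
\]
Consequently the densities $\frac{1}{N}|S^{}_N|^{2}$ are uniformly bounded by $C^{2}$.

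Third, any vague cluster point of the sequence $(\mu^{}_N)$ is thus a measure with density at most $C^{2}$ with respect to Lebesgue measure, and is therefore absolutely continuous; since the diffraction measure itself is such a cluster point, it is purely absolutely continuous. There is no real obstacle here: the only substantive input is the $|S^{}_N|\leq C\sqrt{N}$ bound already proved, and every other step is the standard passage from that bound to absolute continuity, as stated in \cite{AL91,CG17a}. I would simply write the argument as a three-line deduction referring to Theorem~\ref{thm:4} and those references, which is presumably why the authors deemed the proof immediate and closed it with \qed.
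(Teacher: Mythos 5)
Your proposal is correct and follows exactly the route the paper intends: the paper omits the proof (hence the \qed on the statement) and relies on the standard implication, cited earlier via \cite{AL91,CG17a}, that the uniform bound $|S^{}_N(\xi)|\le C\sqrt{N}$ forces every vague cluster point of the approximating densities $\frac{1}{N}|S^{}_N|^2\,d\xi$ to be dominated by $C^2$ times Lebesgue measure, hence purely absolutely continuous. Your three-step write-up is precisely that argument made explicit, so there is nothing to add.
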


Note that the case $n=2$ corresponds to Eq.~\eqref{equation:2a}, which
is the RS case. Let us now look at a couple of examples.

\begin{ex}
  Consider the case $n=3$. We start by setting
  $P_{0}(x)=Q_{0}(x)=R_{0}(x)=x$ and define polynomials $P_{k}$,
  $Q_{k}$ and $R_{k}$ recursively by
\begin{equation}\label{equation:4f}
\begin{pmatrix}
  P^{}_{k+1}(x)\\
  Q^{}_{k+1}(x)\\
  R^{}_{k+1}(x)
 \end{pmatrix} = 
 \begin{pmatrix}
  1 &   x^{3^k} & x^{2\cdot 3^{k}} \\
 1 &  \omega x^{3^{k}}& \omega^{2} x^{2\cdot 3^{k}}\\
 1 & \omega^{2} x^{3^{k}} & \omega  x^{2\cdot 3^{k}}
 \end{pmatrix}\begin{pmatrix}
  P^{}_{k}(x)\\
  Q^{}_{k}(x)\\
  R^{}_{k}(x)\end{pmatrix},
\end{equation}
where $k\in\mathbb{N}_{0}$ and $\omega=\exp(\frac{2\pi
  \mathrm{i}}{3})$.  From Theorem~\ref{thm:4}, we know that the
corresponding sequence of coefficients satisfies the root-$N$ property. This is now a ternary sequence in the alphabet
$\{1,\omega,\omega^{2}\}$.
 
As above, we can connect this to a substitution rule, where we now need 
nine letters. We denote these by $A$, $B$ and $C$ as well as 
the corresponding letters with a single and double bar. Here,
$A$, $B$ and  $C$ correspond to the coefficients of the polynomials
$P$, $Q$ and $R$, respectively, while the barred versions describe
the multiplication by $\omega$ (single bar) and $\omega^{2}$ (double bar).
Accordingly, we have $\overline{\overline{\overline{A}}}=A$ and similarly
for the other letters.

The structure of the matrix $A^{(3,k)}$ yields the following substitution rule
\[
  A\,\mapsto\, A\,B\,C, \qquad 
  B\,\mapsto\, A\,\overline{B}\,\overline{\overline{C}},
  \qquad  C\,\mapsto\, A\,\overline{\overline{B}}\,\overline{C},
\]
and the corresponding rules for the barred letters, leading to the
nine-letter substitution
 \[
  \begin{array}{lcr@{\qquad}lcr@{\qquad}lcr}
    A &\mapsto& A\,B\,C, &
    \overline{A}&\mapsto& \overline{A}\,\overline{B}\,\overline{C}, &
    \overline{\overline{A}}&\mapsto & \overline{\overline{A}}\,
     \overline{\overline{B}}\,\overline{\overline{C}},\\
    B&\mapsto & A\,\overline{B}\,\overline{\overline{C}}, &
    \overline{B}&\mapsto & \overline{A}\,\overline{\overline{B}}\,C, &
     \overline{\overline{B}}&\mapsto &\overline{\overline{A}}\,
     B\,\overline{C},\\
    C&\mapsto & A\,\overline{\overline{B}}\,\overline{C}, &
    \overline{C}&\mapsto & \overline{A}\,B\,\overline{\overline{C}}, &
     \overline{\overline{C}}&\mapsto & \overline{\overline{A}}\,
 \overline{B}\,C.
    \end{array}
\] 
This is a substitution of length $3$, which has a fixed point
obtained by iteration on the initial letter $A$,
\begin{align*}
  A\,& \mapsto\,  A\,B\,C \, \mapsto\, 
  A\,B\,C A\,\overline{B}\,\overline{\overline{C}}
  A\,\overline{\overline{B}}\,\overline{C}\\
  &\mapsto\,
  A\,B\,C 
  A\,\overline{B}\,\overline{\overline{C}}
  A\,\overline{\overline{B}}\,\overline{C}
  A\,B\,C 
  \overline{A}\,\overline{\overline{B}}\,C
  \overline{\overline{A}}\,\overline{B}\,C
  A\,B\,C 
  \overline{\overline{A}}\,B\,\overline{C}
  \overline{A}\,B\,\overline{\overline{C}}\,\mapsto\,\dotsb
\end{align*}
This fixed point is mapped to the ternary sequence of coefficients
of $P_{k}(x)$ by the factor map
\[
    \varphi^{(3)}\! : \; \begin{cases}
      A,B,C\mapsto 1,\\[0.5ex]
      \overline{A},\overline{B},\overline{C}\mapsto\omega,\\[0.5ex]
      \overline{\overline{A}},\overline{\overline{B}},
      \overline{\overline{C}}\mapsto\omega^2.\end{cases}  
\]
By Corollary~\ref{iic}, we know that the weighted Dirac comb
corresponding to this sequence has absolutely continuous diffraction.

Reasoning as before, we see that the three and nine-letter sequences
are in fact mutually locally derivable. Here it again suffices to
consider words of length four, many of which only have a single
pre-image under $\varphi^{(3)}$. An example is $111\omega^2$, for
which the only pre-image is $ABC\overline{\overline{A}}$. Due to
repetitiveness, we can therefore determine the three sublattices
locally, and hence locally derive the nine-letter sequence.
\end{ex}

\begin{ex}
  For our final example, we consider the case $n=4$.  Our recurrence
  relations are given by
\[
\begin{pmatrix}
  P^{}_{k+1}(x)\\
  Q^{}_{k+1}(x)\\
  R^{}_{k+1}(x)\\
  S^{}_{k+1}(x)
 \end{pmatrix} = 
 \begin{pmatrix}
  1 & x^{4^{k}} & x^{2\cdot 4^{k}} & x^{3\cdot 4^{k}} \\
   1 & \mathrm{i}x^{4^{k}} & -x^{2\cdot 4^{k}} & -\mathrm{i}x^{3\cdot 4^{k}}\\
   1 & -x^{4^{k}} & x^{2\cdot 4^{k}} & -x^{3\cdot 4^{k}}\\
  1 & -\mathrm{i}x^{4^{k}} & -x^{2\cdot 4^{k}} & +\mathrm{i}x^{3\cdot 4^{k}}
 \end{pmatrix}\begin{pmatrix}
   P^{}_{k}(x)\\
   Q^{}_{k}(x)\\
   R^{}_{k}(x)\\
   S^{}_{k}(x)\end{pmatrix}
\] 
with initial conditions $P^{}_{0}(x)=Q^{}_{0}(x)=R^{}_{0}(x)=S^{}_{0}(x)=x$.
In this case, we obtain the 16-letter substitution
\[
 A\, \mapsto\, A\,B\,C\,D,\qquad 
 B\, \mapsto\, A\,\overline{B}\,\overline{\overline{C}}\,
     \overline{\overline{\overline{D}}},\qquad
 C\, \mapsto\, A\,\overline{\overline{B}}\, C\,\overline{\overline{D}},\qquad
 D\, \mapsto\, A\,\overline{\overline{\overline{B}}}\,
      \overline{\overline{C}}\,\overline{D},
\]
together with the corresponding rules for the barred letters. The factor
map becomes
\[
    \varphi^{(4)}\! : \; \begin{cases}
      A,B,C,D\,\mapsto\, 1,\\[0.5ex]
      \overline{A},\overline{B},\overline{C},\overline{D}\, 
      \mapsto\, \mathrm{i},\\[0.5ex]
      \overline{\overline{A}},\overline{\overline{B}},
      \overline{\overline{C}},\overline{\overline{D}}\,\mapsto\, -1,\\[0.5ex]
      \overline{\overline{\overline{A}}},
      \overline{\overline{\overline{B}}},
      \overline{\overline{\overline{C}}},
      \overline{\overline{\overline{D}}}\,\mapsto\, -\mathrm{i}.
\end{cases}  
\]
As before, the five-letter and $25$-letter sequences are mutually
locally derivable, and again it is possible to find words of length
$4$ that only have a single ancestor under $\varphi^{(4)}$. One
example is $111\overline{1}$ (where $\overline{1}=-1$) whose ancestor
is $BCD\overline{\overline{A}}$.
\end{ex}

In the same way, starting from the $n\!\times\! n$ Fourier matrix, we
can construct substitution rules for any $n>1$, which all have
absolute continuous components in their spectra. The general structure
is clear from the examples above.  The substitutions act on $n^2$
letters, with $n$ `basic' letters that appear in $n$ different
`flavours' each (distinguished by the number of bars, from $0$ to
$n-1$). The distribution of bars in the image of the four basic
letters can be read off directly from the Fourier matrix, and the
remainder of the substitution is then fixed by cyclic symmetry under
the `bar' operation. The corresponding factor map $\varphi^{(n)}$
identifies all basic letters and the image only depends on the number
of bars, giving the corresponding power of $\exp(2\pi \mathrm{i}/n)$.

Theorem~\ref{thm:4} shows that the sequences of complex
numbers obtained by applying the factor map $\varphi^{(n)}$ satisfy
the root-$N$ property, and hence the corresponding Dirac comb has
purely absolutely continuous diffraction. We conjecture that the
dynamical spectrum of the $n^{2}$-letter hull of the substitution just
contains the absolute continuous component and the pure point
component corresponding to the maximum equicontinuous factor, which is
the corresponding solenoid. 

\section*{Acknowledgments}
The authors would like to thank Michael Baake and Franz
G\"{a}hler for many helpful comments as well as Fabien Durand and
Samuel Petite for extensive discussions on induced
systems.


\begin{thebibliography}{10}

\bibitem{PF02}
N.~Pytheas~Fogg, \textit{Substitutions in Dynamics, 
Arithmetics and Combinatorics}, 
Lecture Notes in Mathematics 1794 (Springer, Berlin, 2002).

\bibitem{MQ10}
M.~Queff{\'e}lec, 
\textit{Substitution Dynamical Systems\,---\,Spectral Analysis}, 2nd
  Edition, Lecture Notes in Mathematics 1294 
(Springer, Berlin, 2010).

\bibitem{SD93}
S.~Dworkin, 
Spectral theory and {X}-ray diffraction, \textit{J.\ Math.\ Phys.} 
\textbf{34} (1993) 2965--2967.

\bibitem{BL16}
M.~Baake and D.~Lenz, 
Spectral notions of aperiodic order, \textit{Discr.\ Cont.\ Dynam.\
  Syst.\ S} \textbf{10} (2017) 161--190; arXiv:1601.06629.

\bibitem{G51}
M.~Golay, 
Statistic multislit spectrometry and its application to the panoramic
  display of infrared spectra, 
\textit{J.\ Optical Soc.\ America} \textbf{41} (1951)
  468--472.

\bibitem{Rud59}
W.~Rudin, 
Some theorems on {F}ourier coefficients, 
\textit{Proc.\ Amer.\ Math.\ Soc.} \textbf{10}
  (1959) 855--859.

\bibitem{Sha51}
H.~Shapiro, 
\textit{Extremal Problems for Polynomials and Power Series}, Masters thesis,
  (MIT, Boston, 1951).

\bibitem{BG13}
M.~Baake and U.~Grimm, 
\textit{Aperiodic Order. Vol. 1. A Mathematical Invitation}, 
Encyclopedia of Mathematics and its Applications 149 (Cambridge University
  Press, Cambridge, 2013).

\bibitem{AL91}
J.-P. Allouche and P.~Liardet, 
Generalized {R}udin--{S}hapiro sequences, \textit{Acta
  Arith.} \textbf{60} (1991) 1--27.

\bibitem{AB14}
A.~Bartlett, 
Spectral theory of $\mathbb{Z}^{d}$ substitutions, 
\textit{Ergod.\ Th.\ \&
  Dynam.\ Syst.}, in press; arXiv:1410.8106.

\bibitem{Fra03}
N.~Frank, Substitution sequences in $\mathbb{Z}^{d}$ with a non-simple
  {L}ebesgue component in the spectrum, 
\textit{Ergod.\ Th.\ \& Dynam.\ Syst.} \textbf{23} (2003)
  519--532.

\bibitem{CG17a}
L.~Chan and U.~Grimm, 
Spectrum of a {R}udin--{S}hapiro-like sequence, \textit{Adv.\ Appl.\
  Math.} \textbf{87} (2017) 16--23; arXiv:1611.04446.

\bibitem{BG16}
M.~Baake and F.~G\"{a}hler, 
Pair correlations of aperiodic inflation rules via
  renormalisation: Some interesting examples, 
\textit{Topol.\ Appl.} \textbf{205} (2016) 4--27;
arXiv:1511.00885.

\bibitem{Dur00}
F.~Durand, 
Linearly recurrent subshifts have a finite number of non-periodic
  subshift factors, \textit{Ergod.\ Th.\ \& Dynam.\ Syst.} 
\textbf{20} (2000) 1061--1078; arXiv:0807.4430.

\bibitem{DOP15}
F.~Durand, N.~Ormes and S.~Petite, Self-induced systems, \textit{Preprint}
  arXiv:1511.01320.

\bibitem{Wal82}
P.~Walters, An {I}ntroduction to {E}rgodic {T}heory, Graduate Texts
  in Mathematics 79 (Springer, New York, 1982, reprint 2000).

\bibitem{MM14}
M.~Moll, Diffraction of random noble means words, 
\textit{J.\ Stat.\ Phys.} \textbf{156} (2014)
  1221--1236; arXiv:1404.7411.

\end{thebibliography}
\end{document}